\newtheorem{thm}{\bf Theorem}[section]
\newtheorem{lem}[thm]{\bf Lemma}
\newtheorem{coro}[thm]{\bf Corollary}
\newtheorem{Def}{\bf Definition}[section]
\newtheorem{Hyps}{\bf Assumptions}
\newtheorem{rmk}{\bf Remark}[section]
\newcommand{\CQFD}{\hfill $\blacksquare$}
\newenvironment{proof}{{\bf Proof}\\}{\CQFD}
\newcommand{\R}{\mathbb{R}}
\newcommand{\N}{\mathbb{N}}
\newcommand{\Dd}{\mathcal{D}}
\newcommand{\llbrack}{\rm [ \! [}
\newcommand{\rrbrack}{\rm ] \! ]}
\newcommand{\1}{\rm 1 \! l}
\newcommand{\Qq}{\mathcal{Q}}
\newcommand{\eps}{\varepsilon}
\def\ds{\displaystyle}
\def\IN{\llbrack 1,N\rrbrack}
\def\a{\alpha}
\def\b{\beta}
\def\t{\tilde}
\def\l{\lambda}
\def\phii{\varphi_i}
\def\phij{\varphi_j}
\def\phiin{\varphi_{i,n}}
\def\O{\Omega}
\def\OT{\Omega\times(0,T)}
\def\OiT{\Omega_i\times(0,T)}
\def\G{\Gamma}
\def\Gij{{\Gamma_{i,j}}}
\def\GijT{{\Gamma_{i,j}\times(0,T)}}
\def\grad{\nabla}
\def\div{\nabla\cdot} 
\def\SiN{\sum_{i=1}^N}
\def\h{\hat}
\def\Oo{\mathcal{O}}
\begin{document}

\title{Two-phase flows involving capillary barriers in heterogeneous porous media.}

\date{}

\author{Cl\'ement {\sc Canc\`es} (corresponding author), \\
Universit\'e de Provence, \\
39, rue F. Joliot Curie, 13453 Marseille Cedex 13. \\
%\courrier{
cances@cmi.univ-mrs.fr
%} 
\\[10pt]
Thierry {\sc Gallou\"et}, \\
Universit\'e de Provence, \\
39, rue F. Joliot Curie, 13453 Marseille Cedex 13. \\
%\courrier{
gallouet@cmi.univ-mrs.fr
%}
 \\[10pt]
Alessio \sc{Porretta}, \\
Universit\`a di Roma Tor Vergata, \\
Via della Ricerca Scientifica 1, 00133 Roma. \\
%\courrier{
porretta@mat.uniroma2.it }
%}

\maketitle
\abstract{
We consider a simplified model of a two-phase flow through a heterogeneous 
porous medium, in which the convection is neglected.
This leads to a nonlinear degenerate parabolic problem in a domain shared in an arbitrary finite number of homogeneous porous media. We introduce a new way to connect capillary pressures on the interfaces between the 
homogeneous domains, which leads to a general notion of solution. We then compare this notion of solution with an existing one, showing that it allows to deal with 
a larger class of problems. We prove the existence of such a solution in a general 
case, then we prove the existence and the uniqueness of a regular solution in the 
one-dimensional case for regular enough initial data. 
}\\[20pt]
{\bf Keywords.} flows in porous media, capillarity, nonlinear PDE of parabolic type. 

\section{Presentation of the problem}

The models of immiscible two-phase flows are widely used in petroleum 
engineering, particularly in basin modeling, whose aim can be the prediction 
of the migration of hydrocarbon components  at geological time scale in a 
sedimentary basin. 

The heterogeneousness of the porous medium leads to the phenomena of 
oil-trapping and oil-expulsion, which is modeled with discontinuous capillary 
pressures between the different geological layers.  

The physical principles models and the mathematical models can be found in 
\cite{AS79,Bear72,CJ86,vDMdN95,Ench}. The phenomenon of capillary trapping has 
been completed only in simplified cases (see \cite{BPvD03}), and several
numerical methods have been developed (see e.g. \cite{EEN98,EEM06}).

The aim of this paper is to  introduce a new notion of weak solution,  which allows 
us to deal with  more general cases than those treated  in \cite{EEM06}, while it is equivalent 
to the notion of weak solution 
introduced in \cite{EEM06} on the already treated cases. 
We will consider a simplified model \eqref{P} defined page \pageref{P}, in which the convection is neglected, 

We then give a uniqueness 
result in the one dimensional case which is  inspired from  the result in \cite{BPvD03}  
and extends this latter one to more general situations, by requiring weaker assumptions 
on the solutions and applying to  a larger class of initial data.

We have to make some assumptions on the heterogeneous porous medium:
\begin{Hyps}\label{geom}{\bf (Geometrical assumptions)}
\begin{enumerate}
\item The heterogeneous porous medium is represented by a polygonal bounded 
connected domain 
$\O\subset\R^d$ with $meas_{\R^d}(\O)>0$, where $meas_{\R^n}$ is the Lebesgue's 
measure of $\R^n$.
\item There exists a finite number $N$ of polygonal connected subdomains 
$(\O_i)_{1\le i\le N}$ of $\O$ such that:
\begin{enumerate}
\item for all $i\in\IN$, $meas_{\R^d}(\O_i)>0$,
\item $\ds \bigcup_{i=1}^N \overline\O_i=\overline \O$,
\item for $(i,j)\in\IN^2$ with $i\neq j$, $\O_i\cap\O_j=\emptyset$.
\end{enumerate}
Each $\O_i$ represents an homogeneous porous medium. One denotes, for all 
$(i,j)\in\IN^2$, $\Gij\subset\O$ the interface between the geological layers 
$\O_i$ and $\O_j$, defined by $\overline\G_{ij}=\partial\O_i\cap\partial\O_j$.
\end{enumerate}
\end{Hyps}
\begin{figure}[htb]
\centering
\includegraphics[scale=.4]{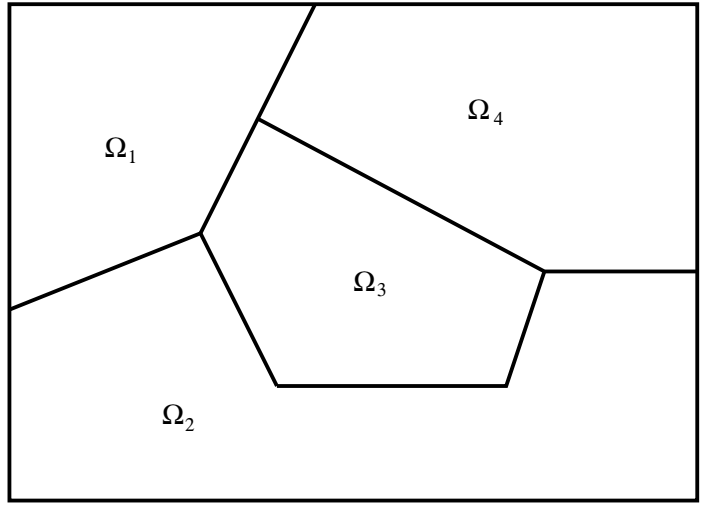}
\caption{An example for the domain $\Omega$ }
\label{domaine} 
\end{figure}
We consider an incompressible and immiscible oil-water flow through $\O$,  
and thus through each $\O_i$. Using Darcy's law, the conservation of oil and 
water phases is given for all $(x,t)\in\OiT$,
\begin{equation}\label{modele1}
\left\{
\begin{array}{l}
\ds \phi_i\partial_t u_i(x,t) -\div\big{(}\eta_{o,i}(u_i(x,t))
(\grad p_{o,i}(x,t)-\rho_o {\bf g})\big{)}=0,\\
-\ds \phi_i\partial_t u_i(x,t) -\div\big{(}\eta_{w,i}(u_i(x,t))
(\grad p_{w,i}(x,t)-\rho_w {\bf g})\big{)}=0,\\
p_{o,i}(x,t)-p_{w,i}(x,t)=\pi_i(u_i(x,t)),
\end{array}
\right.\end{equation}
where $u_i\in[0,1]$ is the oil saturation in $\O_i$ 
(and therefore $1-u_i$ the water saturation), $\phi_i\in\ ]0,1[$ 
is the porosity of $\O_i$, which is supposed to be constant in each $\O_i$ 
for the sake of simplicity, $\pi_i(u_i(x,t))$ is the capillary pressure, 
and ${\bf g}$ is the gravity acceleration. The indices $o$ and $w$ 
respectively stand for the oil and the water phase. Thus, for $\sigma=o,w$, 
$p_{\sigma,i}$ is the pressure of the phase $\sigma$, $\eta_{\sigma,i}$ 
is the mobility of the phase $\sigma$, and $\rho_\sigma$ is the density of 
the phase $\sigma$.

We have now to make assumptions on the data to explicit the transmission 
conditions through the interfaces $\Gij$:
 \begin{Hyps}\label{pii_li} {\bf (Assumptions on the data)}
 \begin{enumerate}
 \item for all $i\in\IN$, $\pi_i\in C^1([0,1],\R)$, with $\pi'_i(x)>0$ for 
 $x\in]0,1[$, 
\item  for all $i\in\IN$, $\eta_{o,i}\in C^0([0,1],\R_+)$ is an increasing 
function fulfilling $\eta_{o,i}(0)=0$, 
\item  for all $i\in\IN$, $\eta_{w,i}\in C^0([0,1],\R_+)$ is a decreasing 
function fulfilling $\eta_{w,i}(1)=0$,
\item the initial data $u_0$ belongs to $L^\infty(\O)$, $0\le u_0 \le 1$.
 \end{enumerate}
\end{Hyps}

One denotes $\a_i=\lim_{s\rightarrow0}\pi_i(s)$ and $\b_i=\lim_{s\rightarrow1}
\pi_i(s)$.
We can now define the monotonous graphs $\tilde\pi_i$ by:
\begin{equation}\label{tpii}
\t\pi_i(s)=\left\{\begin{array}{ll}
\ds \pi_i(s)  &\text{ if } s\in]0,1[,\\
\ds ]-\infty,\a_i]& \text{ if } s=0,\\
\ds [\b_i,+\infty[ &\text{ if } s=1.
\end{array}\right.
\end{equation}
\begin{figure}[htb]
\centering
\includegraphics[scale=.4]{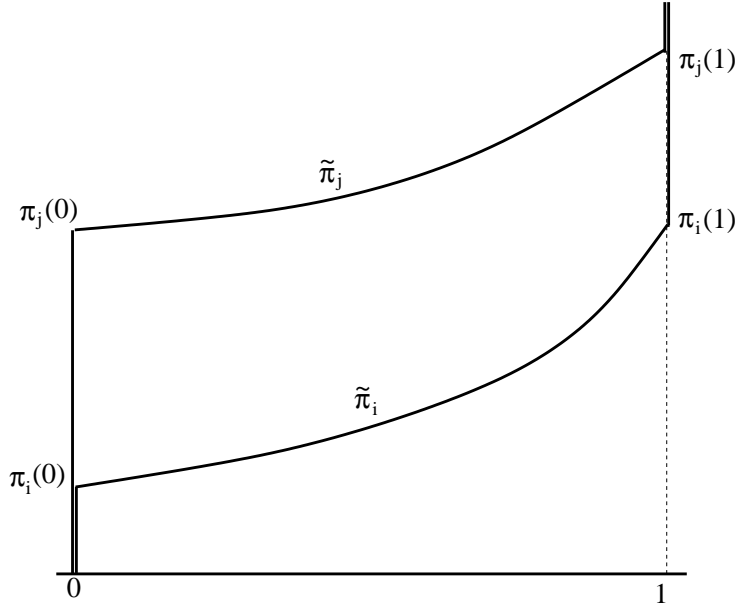}
\caption{Graphs for the capillary pressures}
\label{pression_nb} 
\end{figure}
As it is exposed in \cite{EEM06}, the following conditions must be satisfied on the traces of $u_i$, 
$p_{\sigma,i}$ and $\grad p_{\sigma,i}$ on $\GijT$, still denoted respectively $u_i$, $p_{\sigma,i}$ and 
$\grad p_{\sigma,i}$ (see \cite{Bear72}):
\begin{enumerate}
\item for any $\sigma=o,w$, $(i,j)\in \IN^2$ such that $\Gij\neq\emptyset$, the flux of the phase $\sigma$ 
through $\Gij$ must be continuous:
\begin{equation}\label{flux}
\eta_{\sigma,i}(u_i)(\grad p_{\sigma,i}-\rho_\sigma{\bf g})\cdot{\bf n}_i+
\eta_{\sigma,j}(u_j)(\grad p_{\sigma,j}-\rho_\sigma{\bf g})\cdot{\bf n}_j=0,
\end{equation}
where ${\bf n}_i$ denotes the outward normal of $\Gij$ to $\O_i$;
\item for any $\sigma=o,w$, $(i,j)\in \IN^2$ such that $\Gij\neq\emptyset$, either $p_\sigma$ is continuous or 
$\eta_\sigma=0$. Since the saturation is itself discontinuous across $\Gij$, one must express the mobility at 
the upstream side of the interface. This gives
\begin{equation}\label{raccord_pressions_partielle}
\eta_{\sigma,i}(u_i)(p_{\sigma,i}-p_{\sigma,j})^+-\eta_{\sigma,j}(u_j)(p_{\sigma,j}-p_{\sigma,i})^+=0.
\end{equation}
\end{enumerate}

The conditions (\ref{raccord_pressions_partielle}) have direct consequences on the behaviour of the capillary pressures 
on both side of $\Gij$. Indeed, if $0<u_i,u_j<1$, then the partial pressures $p_o$ and $p_w$ have both to be continuous, 
and so we have the connection of the capillary pressures $\pi_i(u_i)=\pi_j(u_j)$. If $u_i=0$ and $0<u_j<1$, then 
$p_{o,i} \ge p_{o,j}$ and $p_{w,i}=p_{w,j}$, thus $\pi_j(u_j)\le \pi_i(0)$. The same way, $u_i=1$ and $0<u_j<1$ 
implies $\pi_j(u_j)\ge \pi_i(1)$. If $u_i=0$, $u_j=1$, then $p_{o,i}\ge p_{o,j}$ and $p_{w,i}\le p_{w,j}$, 
so $\pi_i(0)\ge\pi_j(1)$. 
Checking that the definition of the graphs $\t\pi_i$ and $\t\pi_j$ implies $\t\pi_i(0)\cap\t\pi_j(0)\neq\emptyset$, 
$\t\pi_i(1)\cap\t\pi_j(1)\neq\emptyset$, we can claim that (\ref{raccord_pressions_partielle}) leads to: 
\begin{equation}\label{raccord_pi1}
\t\pi_i(u_i)\cap\t\pi_j(u_j)\neq\emptyset.
\end{equation}
We introduce the global pressure in $\O_i$
\begin{equation}\label{pression_globale}
\overline{p}_i (x,t)=p_{w,i}(x,t)+\int_0^{u_i(x,t)}\frac{\eta_{o,i}(a)}{\eta_{o,i}(a)+\eta_{w,i}(a)}
\pi'_i(a)\text{d}a
\end{equation}
(see e.g. \cite{AKM90} or \cite{CJ86}), and the global mobility in $\O_i$
\begin{equation}\label{mobilite_globale}
\l_i(u_i(x,t))=\frac{\eta_{o,i}(u_i(x,t))\eta_{w,i}(u_i(x,t))}{\eta_{o,i}(u_i(x,t))+\eta_{w,i}(u_i(x,t))}
\end{equation}
which verifies $\l_i(0)=\l_i(1)=0$, and $\l_i(s)>0$ for $0<s<1$.
Taking into account (\ref{pression_globale}) and (\ref{mobilite_globale}) in (\ref{modele1}), and adding the conservation laws leads to, for $(x,t)\in\OiT$:
\begin{equation}\label{systeme_complet}
\left\{
\begin{array}{l}
\ds\phi_i\partial_t u_i(x,t) - \div\big{(}\eta_{o,i}(u_i(x,t))(\grad\overline{p}_i(x,t)-\rho_o {\bf g})-\l_i(u_i(x,t)) \grad \pi_i(u_i(x,t))\big{)}=0,\\
\ds -\div\left(\sum_{\sigma=o,w}\eta_{\sigma,i}(u_i(x,t))(\grad\overline{p}_i(x,t)-\rho_\sigma {\bf g})\right)=0.
\end{array}
\right.
\end{equation}

We neglect the convective effects, so that we focus on the mathematical modeling of flows with 
discontinuous capillary pressures, which seem to necessary to explain the phenomena of oil trapping. 
This simplification will allow us to neglect the coupling with the second equation of 
(\ref{systeme_complet}), and we get the simple degenerated parabolic equation in $\OiT$:
%We neglect the convective effects, because the term $\l_i(u_i(x,t)) \grad \pi_i(u_i(x,t))$ in the first equation of (\ref{systeme_complet}) seems to be sufficient to explain the phenomena of oil-trapping and oil-expulsion. 
\begin{equation}\label{parabolic}
\phi_i\partial_t u_i(x,t) -\div(\l_i(u_i(x,t))\grad\pi_i(u_i(x,t)))=0\quad \text{ in }\OiT.
\end{equation}
In this simplified framework, the transmission condition \eqref{flux} on the fluxes through $\Gij$ can be rewritten:
\begin{equation}\label{flux_simples}
\l_i(u_i(x,t))\grad(\pi_i(u_i(x,t)))\cdot {\bf n}_i+\l_j(u_j(x,t))\grad(\pi_j(u_j(x,t)))\cdot {\bf n}_j=0 \quad \text{on }\GijT.
\end{equation}
We suppose furthermore that $u_i(x,0)=u_0(x)$ for $x\in \O_i$. In the remainder of this paper, we suppose to take a homogeneous Neumann boundary condition, 
The existence of a weak solution proven in section \ref{existence} can be extended 
to the case of non-homogeneous Dirichlet conditions. Nevertheless, 
homogeneous Neumann boundary conditions are needed to prove the 
theorem \ref{regularity_graph}, and thus to prove the conclusion theorem \ref{existence_SOLA}

Taking into account the equations (\ref{raccord_pi1}), (\ref{parabolic}), (\ref{flux_simples}), the boundary condition, and the initial condition, we can write the problem we aim to solve this way: for all $i\in\IN$, for all $j\in\IN$ such that $\Gij\neq\emptyset$, 
\begin{equation}\label{P}\tag{$\mathcal{P}$}
\left\{\begin{array}{ll}
\phi_i\partial_t u_i -\div(\l_i(u_i)\grad\pi_i(u_i))=0 &\text{in }\OiT,\\
\t\pi_i(u_i)\cap\t\pi_j(u_j)\neq\emptyset & \text{on }\GijT,\\
\l_i(u_i)\grad(\pi_i(u_i))\cdot {\bf n}_i+\l_j(u_j)\grad(\pi_j(u_j))\cdot {\bf n}_j=0 & \text{on }\GijT,\\
\l_i(u_i)\grad(\pi_i(u_i))\cdot {\bf n}_i=0 &\text{on }\partial\O_i\cap\partial\OT,\\
u_i(\cdot,0)=u_0(x) &\text{in }\O_i.
\end{array}\right.
\end{equation}

\begin{rmk}
All the results presented in this paper still hold if one not neglects the effect of the gravity and if one assumes that the global pressure is known, that is for 
problems of the type :
$$
\left\{
\begin{array}{ll}
\ds\phi_i \partial_t u_i  + \div \left( {\bf q} f_i(u_i) + \l_i(u_i) (\rho_o - \rho_w) {\bf g} 
\ds- \l_i(u_i) \grad \pi_i(u_i) \right) =0  &\textrm{in }\OiT, \\
\ds\t\pi_i(u_i)\cap\t\pi_j(u_j)\neq\emptyset & \text{on }\GijT,  \\
\ds \sum_{k=i,j}\left( {\bf q} f_k(u_k) + \l_k(u_k) (\rho_o - \rho_w) {\bf g} 
- \l_k(u_k) \grad \pi_k(u_k) \right)\cdot{\bf n}_k = 0  &\text{on }\GijT, \\
\ds \left( {\bf q} f_i(u_i) + \l_i(u_i) (\rho_o - \rho_w) {\bf g} 
- \l_i(u_i) \grad \pi_i(u_i) \right)\cdot{\bf n}_i =0 &\text{on }\partial\O_i\cap\partial\OT,\\
\ds u_i(\cdot,0)=u_0(x) &\text{in }\O_i,
\end{array}\right.
$$
where $f_i$ is supposed to be a $C^1([0,1],\R)$-increasing function, $\l_i$ is also supposed to belong to
$C^1([0,1],\R_+)$  and ${\bf q}$ satisfies 
\begin{itemize}
\item $\forall i$, ${\bf q}\in \left( C^1(\overline\O_i\times[0,T])\right)^d$,
\item $\div{\bf q} =0$ in $\OiT$, 
\item ${\bf q}_{|\O_i}\cdot{\bf n}_i+ {\bf q}_{|\O_j}\cdot{\bf n}_j =0$ on $\GijT$,
\item ${\bf q}\cdot {\bf n} =0$.
\end{itemize}
In order to ensure the uniqueness result stated in theorem \ref{prop_unicite}, the technical condition 
(see \cite{AL83} or \cite{Otto96}) has to be fulfilled: 
$$\forall i,\qquad f_i\circ\varphi_i^{-1}, \l_i\circ\varphi_i^{-1} \in C^{0,1/2}([0,\phii(1)],\R).$$ 
\end{rmk}
\begin{rmk}
In the modeling of two-phase flows, irreducible saturations are often taken into account. 
One can suppose that there exists $s_i$ and $S_i$ ($0<s_i<S_i<1$) such that $\l_i(s)=0$ if 
$s\notin (s_i,S_i)$. In such a case, the problem \eqref{P} becomes strongly degenerated, 
but a convenient scaling eliminates this difficulty (at least if $s_i\le u_0 \le S_i$ a.e. in $\O_i$).
Moreover, the dependance of the capillary pressure with regard to the saturation can be weak, 
at least for saturations not too close to $0$ or $1$. Thus the effects of the capillarity are often 
neglected for the study of flows in homogeneous porous media, leading to the Buckley-Leverett 
equation (see e.g. \cite{GMT96}). Looking for degeneracy of $u\mapsto\pi_i(u)$ is a more complex 
problem, particularly if the convection is not neglected as above. 
Suppose for example that $\pi_i(u) = \eps u+ P_i$, where $P_i$ are constants, and let $\eps$ 
tend $0$.
Non-classical shocks can appear at the level of the interfaces $\Gij$ (see \cite{non-classic}). 
Thus the notion of entropy solution used by Adimurthi, J. Jaffr{\'e}, and G.D. Veerappa Gowda 
\cite{AJV03} is not sufficient to deal with this problem. 
This difficulty has to be overcome to consider degenerate parabolic problem. But it seems clear 
that  the notion of entropy solution developed by 
K.H. Karlsen, N.H. Risebro, J.D. Towers 
\cite{KRT02a, KRT02b,KRT03} is not  adapted to our problem. 
\end{rmk}

% 2) LA NOTION DE SOLUTION GRAPHE
\section{The notion of weak solution}\label{notion}
In this section, we introduce the notion of weak solution to the 
problem~(\ref{P}), which is more general than the notion of weak solution 
given in \cite{Ench,EEM06}. Indeed, we are able to define such a  
solution even in the case of an arbitrary finite number of different 
homogeneous porous media. Furthermore, the notion of weak solution introduced
in this paper is still available in cases where the one defined in \cite{EEM06} has no more 
sense.
%
%in cases where the truncated capillary pressures have no 
%consistence. 
We finally show that the two notions of solution are equivalent 
in the case where the notion of weak solution in the sense of \cite{EEM06} is 
well defined. The existence of a weak solution to problem~(\ref{P}) 
in a wider case is the aim of the section~\ref{existence}.

%======== introduire un graphe de \t\pi_i======================
One denotes by $\phii$ the $C^1([0,1],\R_+)$ function which naturally appears 
in the problem~(\ref{P}) and which is defined by: $\forall s\in[0,1]$,
\begin{equation}\label{phii_graph}
\phii(s)=\int_0^s \l_i(a)\pi'_i(a)da.
\end{equation}

\begin{rmk}\label{phii_invers}
The assumptions on the data insure that $\phii'>0$ on $]0,1[$, and so we can 
define an increasing continuous function $\phii^{-1}: 
[0,\phii(1)]\rightarrow [0,1]$.
\end{rmk}

We are now able to define the notion of weak solution to the problem~(\ref{P}).

\begin{Def}[weak solution to the problem~(\ref{P})]\label{gws}
Under assumptions \ref{geom} and \ref{pii_li}, a function $u$ is said to be a 
 weak solution to the problem~(\ref{P})
if it verifies:
\begin{enumerate}
\item $u\in L^\infty(\OT)), 0\le u\le 1 \text{ a.e. in }\OT$,
\item $\forall i\in\IN$, $\phii(u_i)\in L^2(0,T;H^1(\O_i))$, where $u_i$ denotes the restriction of $u$ to $\OiT$,
\item $\t\pi_i(u_i)\cap\t\pi_j(u_j)\neq\emptyset \text{ a.e. on } \G_{i,j}\times(0,T)$,
\item for all $\psi\in \Dd(\overline{\O}\times [0,T))$,
\begin{equation}\label{gws_eq}
\begin{array}{c}
\ds\SiN\int_{\O_i}\int_0^T \phi_i u_i(x,t)\partial_t\psi(x,t)dxdt +\SiN\int_{\O_i}\phi_i u_0(x)\psi(x,0)dx\\
\ds-\SiN\int_{\O_i}\int_0^T \grad\phii(u_i(x,t))\cdot\grad\psi(x,t) dxdt=0.
\end{array}
\end{equation}

\end{enumerate}
\end{Def}

The third point of the previous definition, which insures the connection in the graph sense of the 
capillary pressures on the interfaces between several porous media, is well 
defined. Indeed, since $\phii(u_i)$ belongs to $L^2(0,T;H^1(\O_i))$, it admits 
a trace still denoted $\phii(u_i)$ on $\GijT$. Thanks to the 
remark~\ref{phii_invers}, we can define the trace of $u_i$ on $\GijT$.

\begin{rmk}\label{3bis}
One can equivalently substitute the condition:
$$3 bis.\ \ \ \breve{\pi}_i(u_i)\cap\breve{\pi}_j(u_j)\neq\emptyset \text{ a.e. on } \G_{i,j}\times(0,T),$$
to the third point of the definition~\ref{gws}, where $\breve{\pi}_i$ is the 
monotonous graph given by:
\begin{equation}\label{bpii}
\breve{\pi}_i(s)=\left\{\begin{array}{ll}
\ds \pi_i(s)  &\text{ if } s\in]0,1[,\\
\ds [\min_{j}(\a_j),\a_i]& \text{ if } s=0,\\
\ds [\b_i,\max_j(\b_j)] &\text{ if } s=1.
\end{array}\right.
\end{equation}
\end{rmk}

We will now quickly show the equivalence between the notion of weak 
solution to the problem~(\ref{P}) and the notion of weak solution given 
in \cite{EEM06}, in the 
case where this one is well defined, i.e. $N=2$ and 
$\max(\a_1,\a_2)=\a<\b=\min(\b_1,\b_2)$. We denote as in \cite{EEM06} the 
truncated capillary pressures by $\h\pi_1= \max(\a,\pi_1)$, 
$\h\pi_2=\min(\b,\pi_2)$, and we introduce the problem~(\ref{P2}), which is
treated in \cite{EEM06}. 
\begin{equation}\label{P2}\tag{$\widetilde{\mathcal{P}}$}
\left\{\begin{array}{ll}
\phi_i\partial_t u_i -\div(\l_i(u_i)\grad\pi_i(u_i))=0 &\text{in }\OiT,\\
\h\pi_1(u_1)=\h\pi_2(u_2)& \text{on }\GijT,\\
\l_1(u_1)\grad(\pi_1(u_1))\cdot {\bf n}_1+\l_2(u_2)\grad(\pi_2(u_2))\cdot 
{\bf n}_2=0 & \text{on }\GijT,\\
\l_i(u_i)\grad(\pi_i(u_i))\cdot {\bf n}_i=0 &\text{on }\partial\O_i\cap\partial\OT,\\
u_i(\cdot,0)=u_0(x) &\text{in }\O_i.
\end{array}\right.
\end{equation}
%==== inserer courbe des tronques ========================
\begin{figure}[htb]
\centering
\includegraphics[scale=.4]{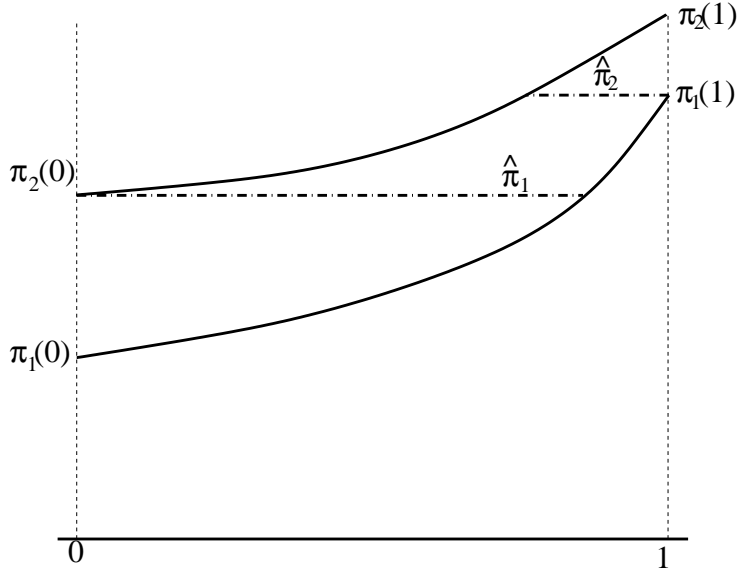}
\caption{Truncated capillary pressures}
\label{pression_tronques_nb} 
\end{figure}

Then it is easy to check
that: $\forall (s_1,s_2)\in[0,1]^2$,
\begin{equation}\label{graph&tronques}
\h\pi_1(s_1)=\h\pi_2(s_2) \LeftrightarrowÊ\t\pi_1(s_1)\cap\t\pi_2(s_2)\neq\emptyset
\LeftrightarrowÊ\breve{\pi}_1(s_1)\cap\breve{\pi}_2(s_2)\neq\emptyset.
\end{equation}

In order to recall the definition of weak solution, we have to introduce the function 
$$\Psi:\left\{\begin{array}{l}
[\a,\b]\rightarrow\R\\
\ds p\mapsto \int_\a^p \min_{j=1,2}(\l_j\circ\pi_j^{-1}(a))\textrm{d}a.
\end{array}\right.$$
$\Psi$ is increasing, and for $i=1,2$, $\Psi\circ\h\pi_i\circ\phii^{-1}$ is a Lipschitz continuous function.
\begin{Def}[weak solution to the problem~(\ref{P2})]\label{weak_sol_graph}
A function $u$ is said to be a weak solution to the problem~(\ref{P2}) 
if it verifies:
\begin{enumerate}
\item $u\in L^\infty(\OT)), 0\le u\le 1 \text{ a.e. in }\OT$,
\item $\forall i\in\{1,2\}, \phii(u_i)\in L^2(0,T;H^1(\O_i))$,
\item $w:\OT\rightarrow\R$, defined for $(x,t)\in \OiT$ by $w(x,t)=\Psi\circ\h\pi_i(u_i)(x,t)$ belongs to $L^2(0,T;H^1(\O))$,
\item for all $\psi\in \Dd(\overline{\O}\times [0,T))$,
$$
\begin{array}{c}
\ds\SiN \int_{\O_i}\int_0^T \phi_i u_i(x,t)\partial_t\psi(x,t)dxdt +\SiN\int_{\O_i}\phi_i u_0(x)\psi(x,0)dx\\
\ds-\SiN\int_{\O_i}\int_0^T \grad\phii(u_i(x,t))\cdot\grad\psi(x,t) dxdt=0.
\end{array}
$$
\end{enumerate}
\end{Def}
\begin{rmk}
The notion of weak solution to the problem~(\ref{P2}) can be adapted in the case where there are $N>2$ 
homogeneous domains, but we have to keep conditions of compatibility on 
$(\a_i)_{1\le i\le N} $ and  $(\b_i)_{1\le i\le N}$. 
\end{rmk}
{\bf Proof of the equivalence of the weak solutions}

On the one hand, if $u$ is a weak solution to the problem~(\ref{P2}) 
in the sense of definition \ref{weak_sol_graph}, then for a.e. $t\in(0,T)$, 
$w(\cdot,t)\in H^1(\O)$,and particularly $w(\cdot,t)$ admits a trace on $\Gij$, 
whose value is in the same time $\Psi(\h\pi_i(u_i(\cdot,t)))$ and 
$\Psi(\h\pi_j(u_j(\cdot,t)))$. Since $\Psi$ in increasing, for a.e $(x,t)\in\GijT$,  
$\h\pi_i(u_i(x,t))=\h\pi_j(u_j(x,t))$. Using~(\ref{graph&tronques}), 
we conclude that any weak solution to the problem~(\ref{P2}) is a weak solution 
to the problem~(\ref{P}) in the sense of definition~\ref{gws}.

On the other hand, if $u$ is a weak solution to the problem~(\ref{P}) 
in the sense of definition~\ref{gws}, then thanks to~(\ref{graph&tronques}), 
for almost every $(x,t)\in\GijT$,
\begin{equation}\label{graph_ench}
\h\pi_i(u_i(x,t))=\h\pi_j(u_j(x,t))   \Leftrightarrow  \Psi\circ\h\pi_i\circ\phii^{-1}(\phii(u_i(x,t)))=\Psi\circ\h\pi_j\circ\phij^{-1}(\phij(u_j(x,t))).
\end{equation}
Since $\Psi\circ\h\pi_i\circ\phii^{-1}$ is a Lipschitz continuous function, 
the second point in definition~\ref{gws} insures us that $\Psi\circ\h\pi_i(u_i)$ 
belongs to $L^2(0,T,H^1(\O_i))$ for $i=1,2$, and 
(\ref{graph_ench}) insures the connection of the traces on $\GijT$, 
then the third point of definition~\ref{weak_sol_graph} is fulfilled and $u$ is a 
weak solution to the problem~(\ref{P2}). 
\CQFD

\begin{rmk}
We can define a {\em function} $\t\pi^{-1}_i, i\in\IN$, which verifies  
$\t\pi^{-1}_i\circ\t\pi_i(s)=s$ for any $s\in [0,1]$. Using the function 
defined on $\R$ by $\t\Psi(p)=\int_{-\infty}^p \min_{j=1,2}(\l_j\circ\t\pi_j^{-1}(a))
\textrm{d}a$, it is easy to check that  we can equivalently substitute the function 
$\t\Psi\circ\pi_i(u_i)$ to $\Psi\circ\h\pi_i(u_i)$ in the third point of 
definition~\ref{weak_sol_graph}. This function is still defined if $\a\ge\b$, but it 
becomes identically $0$, so the notion of weak solution to the problem~(\ref{P2}) 
is weaker than the notion of weak solution to the problem~(\ref{P}). Indeed, in such a 
case, $u(x,t)=u_0(x)=a\in]0,1[$ for any 
$(x,t)\in\OT$ is a weak solution to the problem~(\ref{P2}), 
but it does not fulfill the third point in 
definition~\ref{gws}.
\end{rmk}

% 3) EXISTENCE D'UNE SOLUTION GRAPHE
\section{Existence of a weak solution}\label{existence}
The aim of this section is to prove the following theorem, which claims the existence of a 
 weak solution to the problem~(\ref{P}). This result has already been proven in 
 section~\ref{notion} in the case $N=2$ and $\a>\b$, for which the notion of weak 
 solution in the sense of definition~\ref{gws} is equivalent to the notion of weak 
 solution in the sense of definition~\ref{weak_sol_graph}.
\begin{thm}[Existence of a weak solution]\label{thm_existence}
Under assumptions~\ref{geom} and \ref{pii_li}, there exists a  weak solution to 
problem~(\ref{P}) in the sense of definition~\ref{gws}.
\end{thm}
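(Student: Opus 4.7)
The plan is to construct $u$ as the limit of solutions to a regularized problem in which the graph interface condition \eqref{raccord_pi1} is replaced by a classical trace equality, and then to recover the graph condition by careful passage to the limit.

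\textbf{Regularization.} For $\eta>0$, I extend the saturation to $[-\eta,1+\eta]$ and replace $\pi_i$ by a strictly increasing $C^1$ map $\pi_i^\eta:[-\eta,1+\eta]\to[A_\eta,B_\eta]$, with $A_\eta<\min_j\a_j$ and $B_\eta>\max_j\b_j$ chosen independently of $i$, and with $\pi_i^\eta$ glued outside $[0,1]$ to a single common affine piece; similarly I extend $\l_i$ to $\l_i^\eta>0$. Condition \eqref{raccord_pi1} is replaced by the classical trace equality $\pi_i^\eta(u_i^\eta)=\pi_j^\eta(u_j^\eta)$ on $\Gij\times(0,T)$, which is now well-posed because the $\pi_i^\eta$ share the common range $[A_\eta,B_\eta]$. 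Equivalently, $p^\eta$ defined piecewise by $p^\eta|_{\O_i}=\pi_i^\eta(u_i^\eta)$ lies in $H^1(\O)$ a.e.\ in time, so the problem becomes a standard degenerate parabolic equation on $\O$ with piecewise coefficients. Existence at the $\eta$-level follows from an implicit-Euler time discretization combined with Minty--Browder monotonicity at each elliptic step.

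\textbf{A priori estimates and passage to the limit in the bulk equation.} A comparison argument using constant sub- and super-solutions gives $0\le u^\eta\le 1$. Testing the regularized equation against $p^\eta$ (a legitimate global test function) and telescoping the interface flux-continuity produces the $\eta$-uniform bound
\begin{equation*}
\SiN\int_0^T\!\!\int_{\O_i}|\grad\phii^\eta(u_i^\eta)|^2\,dx\,dt\le C,
\end{equation*}
with $\phii^\eta(s)=\int_0^s\l_i^\eta(a)(\pi_i^\eta)'(a)\,da$, together with a bound on $\partial_t u_i^\eta$ in $L^2(0,T;(H^1(\O_i))')$. Aubin--Simon compactness then yields strong $L^2(\O_i\times(0,T))$-convergence of $\phii(u_i^\eta)$ and, via continuity of $\phii^{-1}$, a.e.\ convergence $u_i^\eta\to u_i\in[0,1]$. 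Points~1, 2 and~4 of definition~\ref{gws} then pass to the limit directly in the weak formulation \eqref{gws_eq}.

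\textbf{The graph interface condition---main obstacle.} The delicate step is point~3 of definition~\ref{gws}. Trace compactness for $\phii(u_i^\eta)$ on $\Gij\times(0,T)$ (combined with the time-derivative bound) upgrades the convergence to the interface, giving $u_i^\eta\to u_i$ and $u_j^\eta\to u_j$ a.e.\ on $\Gij\times(0,T)$. At points where $u_i,u_j\in(0,1)$, the common value $p^\eta=\pi_i^\eta(u_i^\eta)=\pi_j^\eta(u_j^\eta)$ converges to $\pi_i(u_i)=\pi_j(u_j)\in\t\pi_i(u_i)\cap\t\pi_j(u_j)$. At points where $u_i$ or $u_j$ reaches $0$ or $1$, the sequence $p^\eta$ may drift to $\pm\infty$, and a case analysis is required, exploiting that outside $[0,1]$ all extensions $\pi_i^\eta$ coincide, so that any limit point of $p^\eta$ in $\overline{\R}$ still belongs to $\t\pi_i(u_i)\cap\t\pi_j(u_j)$---which is nonempty by the discussion preceding \eqref{raccord_pi1}. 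Handling these endpoint cases, in which $p^\eta$ is unbounded, is the main technical obstacle of the proof.
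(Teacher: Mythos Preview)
Your strategy coincides with the paper's: approximate by a non-degenerate problem in which the $\pi_i$ are modified to share a common range so that the interface condition becomes a genuine trace equality, obtain the uniform $L^2_tH^1_x$ bound on $\phii^\eta(u_i^\eta)$ by testing against the global pressure, pass to the limit in the bulk, and recover the graph condition by a pointwise case analysis on the traces. The paper realizes the common range by setting $\pi_{i,n}(0)=\pi_{j,n}(0)\to-\infty$ and $\pi_{i,n}(1)=\pi_{j,n}(1)\to+\infty$ while keeping the saturation in $[0,1]$; your extension to $[-\eta,1+\eta]$ is therefore superfluous once comparison gives $0\le u^\eta\le1$, and note that your ``single common affine piece outside $[0,1]$'' already forces $\pi_i^\eta(0)$ and $\pi_i^\eta(1)$ to be $i$-independent, so $\pi_i^\eta$ cannot agree with $\pi_i$ near the endpoints---the paper handles this by requiring only $\pi_{i,n}\to\pi_i$ in $L^1(0,1)$ and locally uniformly on $(0,1)$. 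The remaining differences are in the compactness tools: the paper uses a time-translate estimate plus Kolmogorov's criterion and the Minty trick (Aubin--Simon does not apply verbatim because the time-derivative bound is on $u_i^\eta$, not on $\phii(u_i^\eta)$), and it makes your ``trace compactness'' step precise by interpolating the $L^2_tH^1_x$ bound and the translate estimate to get strong convergence of $\phii^\eta(u_i^\eta)$ in $L^2(0,T;H^s(\O_i))$ for $s\in(1/2,1)$, whence convergence of the traces on $\Gij$.
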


\begin{proof}
In order to prove the existence of a weak solution to the problem~(\ref{P})
 in the sense of the definition~\ref{gws}, we build a sequence of solutions to 
 approximated problems~(\ref{Pn}), which converges, up to a subsequence, toward 
 a weak solution to the problem~(\ref{P}). The approximated problems do not involve 
 capillary barriers, so existence and uniqueness of  such approximated 
 solutions is given in~\cite{NoDEA}.
 We let the proof of the following technical lemma to the reader.
\begin{lem}\label{phiin}
There exists sequences $(\l_{i,n})_n$, $(\pi_{i,n})_n$ belonging to $(C^\infty([0,1],\R))^{\N}$
 such that, for $i\in\IN$, and for $n$ large enough:
\begin{itemize}
\item $\ds {\l_{i,n}}_{| [0,1/n]\cup[1-1/n,1]}=\frac{1}{n^2}$,
 $\ds \l_{i,n}(s)> \frac{1}{2n^2},$ for all $s\in[0,1]$, $\l_{i,n}\rightarrow \l_i$ uniformly on $[0,1]$,
\item $\pi_{i,n}(0)=\pi_{j,n}(0) \rightarrow -\infty $, $\pi_{i,n}(1)=\pi_{j,n}(1) \rightarrow +\infty $, $Kn^{\frac{3}{2}}>\pi_{i,n}' \ge \ds\frac{1}{n}$,
 $\pi_{i,n}\rightarrow \pi_i$ in $L^1(0,1)$,
 $\pi_{i,n}\rightarrow \pi_i$ and $\pi_{i,n}'\rightarrow \pi_i'$ uniformly on any compact set of $]0,1[$,
\item the function $\phiin\!: s\mapsto\!\! \int_0^s \l_{i,n}(a)\pi_{i,n}'(a)da$ furthermore fulfills
 $\phiin([0,1])=\phii([0,1])$ and $\phiin \rightarrow \phii$ in  $W^{1,\infty}(0,1)$.
\end{itemize}
\end{lem}

 We also define the increasing functions:
 $$
\Psi_n: \left\{\begin{array}{l}
 [a_n,b_n]\rightarrow\R\\
\ds p\mapsto \int_{a_n}^p \min_{j\in\IN}(\l_{j,n}\circ\pi_{j,n}^{-1}(a))
\textrm{d}a.
\end{array}\right.
 $$ 
 The conditions on the functions on the intervals
 $[0,\frac{1}{n}]\cup[1-\frac{1}{n},1]$ insures that for any fixed large $n$, the
 functions $(\phiin\circ\pi_{i,n}^{-1}\circ\Psi_n^{-1})'$ are Lipschitz
 continuous.  Then thanks to \cite{NoDEA}, for all $n$,  the approximated problems:
 \begin{equation}\label{Pn}
\left\{\begin{array}{ll}
\phi_i \partial_t u_{i,n} -\div(\l_{i,n}(u_{i,n})\grad\pi_{i,n}(u_{i,n}))=0 
&\text{in }\OiT,\\
\pi_{i,n}(u_{i,n})=\pi_{j,n}(u_{j,n}) & \text{on }\GijT,\\
\l_{i,n}(u_{i,n})\grad(\pi_{i,n}(u_{i,n}))\!\cdot\! {\bf n}_i\!+\!\l_{j,n}(u_{j,n})
\grad(\pi_{j,n}(u_{j,n}))\!\cdot\! {\bf n}_j=0 & \text{on }\GijT,\\
\l_{i,n}(u_{i,n})\grad(\pi_{i,n}(u_{i,n}))\cdot {\bf n}_i=0 &\text{on }
\partial\O_i\cap\partial\OT,\\
u_{i,n}(x,0)=u_0(x) &\text{in }\O.
\end{array}\right.
\end{equation}
 admit a unique weak solution in the sense of definition~\ref{weak_sol_n} given
 below, and this solution belongs to $C([0,T],L^p(\O))$ for $1\le p<+\infty$.
 \begin{Def}\label{weak_sol_n}
{\bf (Weak solutions for approximated problems)}\\ 
A function $u_n$ is said to be a weak solution to the problem~(\ref{Pn}) 
if it verifies:
\begin{enumerate}
\item $u_n\in L^\infty(\OT)), 0\le u_n\le 1 \text{ a.e. in }\OT$,
\item $\forall i\in\{1,2\}, \phiin(u_{i,n})\in L^2(0,T;H^1(\O_i))$,
\item $w_n:\OT\rightarrow\R$, defined on $\OiT$ by $w_n=\Psi_n\circ\pi_{i,n}(u_{i,n})$ belongs to \\
$L^2(0,T;H^1(\O))$,
\item for all $\psi\in \Dd(\O\times [0,T))$,
\begin{equation}\label{eq_n}
\begin{array}{c}
\ds\SiN \int_{\O_i}\int_0^T \phi_i u_{i,n}(x,t)\partial_t\psi(x,t)dxdt +\SiN\int_{\O_i} \phi_i u_0(x)\psi(x,0)dx\\
\ds-\SiN\int_{\O_i}\int_0^T \grad\phiin(u_{i,n}(x,t))\cdot\grad\psi(x,t) dxdt=0.
\end{array}
\end{equation}
\end{enumerate}
\end{Def}
 The proof of existence of a weak solution given in \cite{NoDEA}, shows that for all $i\in\IN$, for all $n$, there exists $C_1>0$ not depending on $n$ such that, for all $i\in\IN$:
 \begin{equation}\label{L2H1_estimate}
 \| \phiin(u_{i,n})\|^2_{L^2(0,T;H^1(\O_i))} \le C_1\|\pi_{i,n}\|_{L^1(0,1)},
 \end{equation}
thus $( \phiin(u_{i,n}))_n$ is a bounded sequence of $L^2(0,T;H^1(\O_i))$ using lemma~\ref{phiin}. A study of the proof of the time translate estimate used in \cite{NoDEA,EEM06}, and detailed in 
\cite[lemma 4.6]{EGH00} leads to the existence of $C_2$ not depending on $n$ such that: 
\begin{equation}\label{time_trans}
\| \phiin(u_{i,n}(\cdot,\cdot+\tau))-\phiin(u_{i,n}(\cdot,\cdot))\|^2_{L^2(\O_i\times(0,T-\tau))}\le \tau C_2
\|\pi_{i,n}\|_{L^1(0,1)}\|\varphi_{i,n}'\|_{L^\infty(0,1)}.
\end{equation}
 Using lemma~\ref{phiin} once again, estimates~(\ref{L2H1_estimate}), (\ref{time_trans}) allow us to apply  Kolmogorov's compactness criterion (see e.g. \cite{Bre83}), thus we can claim the relative compactness of the sequence $(\phiin(u_{i,n}))_n$ in $L^2(\OiT)$. 
 There exists $f_i\in L^2(0,T;H^1(\O_i))$ such that
 $$\phiin(u_{i,n}) \rightarrow f_i \text{  in } L^2(\OiT),$$
 $$\phiin(u_{i,n}) \rightarrow f_i \text{ weakly in }L^2(0,T;H^1(\O_i)).$$

Let us now recall a very useful lemma, classically called Minty trick, and introduced in this framework by Leray and Lions in the famous paper~\cite{LL65}.
\begin{lem}[Minty trick]
Let $(\phi_n)_n$ be a sequence of non-decreasing functions with for all $n$, $\phi_n:\R\rightarrow\R$, and let $\phi:\R\rightarrow \R$ be a non-decreasing continuous function such that:
\begin{itemize}
\item $\phi_n\rightarrow \phi$ pointwise,
\item there exists $g\in L^1_{loc}(\R)$ such that $|\phi_n|\le g$.
\end{itemize}
Let $\Oo$ be an open subset of $\R^k$, $k\ge 1$. Let $(u_n)_n\in (L^\infty(\Oo))^\N$, let $u\in L^\infty(\Oo)$ and let $f\in L^1(\Oo)$ such that:
\begin{itemize}
\item $u_n\rightarrow u$ in the $L^\infty(\Oo)$-weak-$\star$ sense,
\item $\phi_n(u_n)\rightarrow f$ in $L^1(\Oo)$.
 \end{itemize}
 Then 
 $$ f=\phi(u).$$
\end{lem}

 Since $0\le u_{i,n} \le 1$, $(u_{i,n})_n$ converges up to a subsequence to $u_i$ in the $L^\infty(\OiT)$-weak-$\star$ sense. $(\phiin)_n$ converges uniformly toward $\phii$ on $[0,1]$, and we can easily check, using Minty trick, 
 that $f_i=\phii(u_i)\in L^2(0,T;H^1(\O_i))$. Thus we can pass to the limit in the formulation~(\ref{eq_n})
 to obtain the wanted weak formulation:
 $$
 \begin{array}{c}
\ds\SiN\int_{\O_i}\int_0^T \phi_i u_i(x,t)\partial_t\psi(x,t)dxdt +\SiN \int_{\O_i} \phi_i u_0(x)\psi(x,0)dx\\
\ds-\SiN\int_{\O_i}\int_0^T \grad\phii(u_i(x,t))\cdot\grad\psi(x,t) dxdt=0.
\end{array}
 $$
 
 The last point needed to achieve the proof of theorem~\ref{thm_existence} is the convergence of the traces of the approximate solutions $(u_{i,n})_n$ on $\GijT$ toward the trace of $u_i$, and to verify that $\t\pi_i(u_i)\cap\t\pi_j(u_j)\neq\emptyset$ a.e. on $\GijT$.
 
Since $\O_i$ has a Lipschitz boundary, there exists an operator $P$, continuous from 
$H^1(\O_i)$ into $ H^1(\R^d)$, and also from $L^2(\O_i)$ into $L^2(\R^d)$, such that $Pv_{|\O_i}=v$ for all $v\in L^2(\O_i)$. Then $P$ is continuous from $H^s(\O_i)$ into $H^s(\R^d)$ for all $s\in[0,1]$.
One has, for all $v\in H^s(\O_i)$, 
$$\| v \|_{H^s(\O_i)}\le \| Pv \|_{H^s(\R^d)}\le\|Pv\|_{H^1(\R^d)}^s \|Pv\|_{L^2(\R^d)}^{1-s}\le 
C \|v \|_{H^1(\O_i)}^s \|v\|_{L^2(\O_i)}^{1-s}.$$
One deduces from the previous inequality and from (\ref{time_trans}) that for all $s\in ]0,1[$, for all $\tau \in ]0,T[$, there exists $C_3$ not depending on $n, \tau$ such that
\begin{equation}\label{times_trans2}
\|  \phiin(u_{i,n}(\cdot,\cdot+\tau))-\phiin(u_{i,n}(\cdot,\cdot))\|^2_{L^2(0,T-\tau;H^s(\O_i))}\le\tau^{1-s} C_3 
\end{equation}
For $s_1>s_2$, $H^{s_1}$ is compactly imbedded in $H^{s_2}$, and then estimate~(\ref{times_trans2}) allows us 
to claim that the sequence 
$(\phiin(u_{i,n}))_n$ is relatively compact in $L^2(0,T;H^s(\O_i))$ for all 
$s\in]0,1[$. Particularly, one can extract a subsequence converging toward  
$\phii(u_i)$ in $L^2(0,T;H^s(\O_i))$.
We can claim, using once again Minty trick, that the traces of $(\phiin(u_{i,n}))_n$ on $\Gij$ also converge toward the trace of $\phii(u_i)$, still denoted $\phii(u_i)$ in $L^2(0,T;H^{s-1/2}(\Gij))$, and particularly for almost every $(x,t)\in\GijT$. Since $\phii$ is increasing, $(u_{i,n}(x,t))_n$ converges almost everywhere on $\GijT$ toward $u_i(x,t)$.

Let us now check that $\t\pi_i(u_i)\cap\t\pi_j(u_j)\neq\emptyset$ a.e. on $\GijT$. For almost every $(x,t)\in\GijT$ the sequence 
$(\pi_{i,n}(u_{i,n}(x,t)))_n$ converges (up to a new extraction) toward $\gamma_i(x,t)\in\overline\R$. Since for all $n$, 
$\pi_{i,n}(u_{i,n}(x,t))=\pi_{j,n}(u_{j,n}(x,t))$, one has: 
\begin{equation}\label{traces_lim}
\gamma_i(x,t)=\gamma_j(x,t) \text{ a.e. on }\GijT.
\end{equation}

If $u_i(x,t)\in\ ]0,1[$, then $\gamma_i(x,t)=\pi_i(u_i(x,t))$. If $u_i(x,t)=0$, $\gamma_i(x,t)\le \a_i$, and $\gamma_i(x,t)\in\t\pi_i(0)$. In the same way, if $u_i(x,t)=1$, $\gamma_i(x,t)\in\t\pi_i(1)$. 

This achieves the proof of theorem \ref{thm_existence}, because relation~(\ref{traces_lim}) insures the connection of the traces in the sense of:
$$
\t\pi_i(u_i)\cap\t\pi_j(u_j)\neq\emptyset  \text{ a.e. on }\GijT.
$$
\end{proof}

% A REGULARITY RESULT

\section{A regularity result}\label{section_regularity}
\par In this section and in section \ref{unicite}, we show the existence and 
the uniqueness of a  solution with bounded flux to the problem~(\ref{P}) in the 
one-dimensional case. We make the proofs in the case where there are only two 
sub-domains $\O_1=]-1,0[$ and $\O_2=]0,1[$, but a straightforward adaptation 
of them gives the same result  for 
an arbitrary finite number of $\O_i$, each one with an arbitrary finite measure.
We now state the main result of this section, which claims the existence of a
solution with  bounded spatial derivatives on $\Qq_i$, where $\Qq_i=$ 
$\OiT$. We also set $\Qq=]-1,1[\times]0,T[$ and $\G=\{x=0\}$.
\begin{thm}[Existence of a bounded flux solution]\label{regularity_graph}
Let $u_0 \in L^\infty(-1,1)$, $0\le u_0\le 1$ such that:
\begin{itemize}
\item $\phii(u_0) \in W^{1,\infty}(\O_i)$,
\item $\t\pi_1(u_{0,1})\cap \t\pi_2(u_{0,2})\neq\emptyset$ on $\G$.
\end{itemize}
Then there exists a weak solution $u$  to the 
problem~(\ref{P}) such that $\partial_x\phii(u_i)\in L^\infty(\Qq_i)$.
\end{thm}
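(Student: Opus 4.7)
I would extend the approximation scheme of Section~\ref{existence} with regularized initial data, and establish a uniform $L^\infty$ bound on the flux $W_n:=\partial_x\phiin(u_{i,n})$ of the approximate solutions, then pass to the limit. Concretely, construct a sequence $u_{0,n}\in C^\infty([-1,1])$ with $0\le u_{0,n}\le 1$ converging to $u_0$ a.e., compatible with the pointwise interface condition of $(\mathcal{P}_n)$ (that is, $\pi_{1,n}(u_{0,n,1}(0^-))=\pi_{2,n}(u_{0,n,2}(0^+))$), and satisfying $\|\partial_x\phiin(u_{0,n,i})\|_{L^\infty(\Omega_i)}\le M_0$ with $M_0$ depending only on $\|\partial_x\phii(u_0)\|_{L^\infty}$. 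This is possible thanks to the graph compatibility $\t\pi_1(u_{0,1})\cap\t\pi_2(u_{0,2})\neq\emptyset$ at $\Gamma$ combined with the uniform convergences of Lemma~\ref{phiin}. For each $n$, problem $(\mathcal{P}_n)$ is nondegenerate with smooth coefficients, and admits a classical solution $u_n$.

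\textbf{Core estimate.} Differentiating the equation $\phi_i\partial_t u_{i,n}=\partial_x W_n$ in $x$ shows that in each $\mathcal{Q}_i$, $W_n$ satisfies
\[
\partial_t W_n-\frac{\phiin'(u_{i,n})}{\phi_i}\,\partial_{xx}W_n-\frac{\phiin''(u_{i,n})}{\phi_i\,\phiin'(u_{i,n})}\,W_n\,\partial_x W_n=0,
\]
a parabolic equation without zeroth-order term, with smooth and strictly positive principal coefficient. On the outer boundary the Neumann condition gives $W_n(\pm 1,t)=0$, the flux transmission at $\Gamma$ yields $W_n(0^-,t)=W_n(0^+,t)$, and at $t=0$, $|W_n|\le M_0$ by construction. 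I claim that $\|W_n\|_{L^\infty(\mathcal{Q}_i)}\le M_0$. Suppose not; by the weak maximum principle and the vanishing of $W_n$ at $x=\pm 1$, the supremum $K$ of $|W_n|$ must exceed $M_0$ and be attained at some $(0,t^*)$ with $t^*>0$; say $W_n(0,t^*)=K$. Since $W_n$ is not constant on $\mathcal{Q}_1$ (it vanishes at $x=-1$), Hopf's boundary-point lemma applied in $\Omega_1$ gives $\partial_x W_n(0^-,t^*)>0$; likewise in $\Omega_2$ one obtains $\partial_x W_n(0^+,t^*)<0$. Using $\partial_x W_n=\phi_i\partial_t u_{i,n}$, this translates to $\partial_t u_{1,n}(0^-,t^*)>0$ and $\partial_t u_{2,n}(0^+,t^*)<0$. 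But differentiating the interface condition $\pi_{1,n}(u_{1,n}(0^-,t))=\pi_{2,n}(u_{2,n}(0^+,t))$ in time yields $\pi_{1,n}'(u_{1,n}(0^-,t^*))\,\partial_t u_{1,n}(0^-,t^*)=\pi_{2,n}'(u_{2,n}(0^+,t^*))\,\partial_t u_{2,n}(0^+,t^*)$, a contradiction since $\pi_{i,n}'>0$. The case $W_n(0,t^*)=-K$ is symmetric.

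\textbf{Passage to the limit and main obstacle.} With the uniform bound $\|W_n\|_{L^\infty(\mathcal{Q}_i)}\le M_0$ in hand, the compactness scheme of the proof of Theorem~\ref{thm_existence} yields a weak solution $u$ of $(\mathcal{P})$; by weak-$\ast$ convergence of $W_n$ in $L^\infty(\mathcal{Q}_i)$ and the identification of its limit with $\partial_x\phii(u_i)$, we conclude $\partial_x\phii(u_i)\in L^\infty(\mathcal{Q}_i)$. I expect the main difficulty to be the initial-data regularization step: producing $u_{0,n}$ that is simultaneously compatible with the sharp pressure-matching condition of $(\mathcal{P}_n)$ at $x=0$ and whose approximate flux $\partial_x\phiin(u_{0,n})$ is controlled uniformly in $n$, which requires a careful local modification of $u_0$ near $x=0$ using the graph condition $\t\pi_1(u_{0,1})\cap\t\pi_2(u_{0,2})\neq\emptyset$ to select a common pressure value pinning down $u_{0,n,1}(0^-)$ and $u_{0,n,2}(0^+)$. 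A secondary technical point is the rigorous application of Hopf's lemma at the interface, which follows from classical parabolic regularity applied separately to each subproblem with smooth nondegenerate coefficients.
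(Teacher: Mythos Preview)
Your approach shares the paper's core insight---the flux $W_n$ satisfies a linear parabolic equation with no zeroth-order term and hence obeys a maximum principle---but implements it differently. The paper introduces a \emph{second} layer of regularization: it smooths the spatial discontinuity at $x=0$ by a mollifier $\theta(kx)$, producing fully smooth problems $(\mathcal{P}_{n,k})$ on all of $(-1,1)$ for which classical theory gives $u_{n,k}\in C^\infty([0,T]\times[-1,1])$; the flux $f_{n,k}$ then solves one parabolic equation on the whole interval with homogeneous Dirichlet data at $\pm1$, and the standard maximum principle yields the bound directly. One then lets $k\to\infty$, then $n\to\infty$. You instead keep the sharp interface in $(\mathcal{P}_n)$ and run a Hopf argument across it; the contradiction via $\pi_{i,n}'>0$ and the sign of $\partial_t u_{i,n}$ on each side is neat and, granted enough regularity, correct. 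Your initial-data worry is well placed: the paper handles it by an explicit truncation formula (Lemma~\ref{reg_u0}), essentially shifting $\phii(u_0)$ by a constant and truncating, which gives exactly the uniform control $\|\partial_x\phiin(u_{0,n})\|_\infty\le\|\partial_x\phii(u_0)\|_\infty$.

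The step you call ``secondary'' is, however, precisely what the paper's $k$-layer is engineered to bypass, and it is not free. Problem $(\mathcal{P}_n)$ is a transmission problem with coefficients that jump at $x=0$; the reference invoked for it yields only a weak solution $u_n\in C([0,T],L^1)$. Your Hopf argument needs $W_n\in C^1$ up to $\{x=0\}$ from each side (hence $u_n\in C^{2,1}(\overline{\O_i}\times(0,T])$) and needs the interface relation $\pi_{1,n}(u_{1,n})=\pi_{2,n}(u_{2,n})$ to be differentiable in $t$. Such regularity for one-dimensional diffraction problems is available (cf.\ \cite{LSU}), but it requires corner compatibility at $(0,0)$ that your initial data must be built to satisfy---and note that ``$u_{0,n}\in C^\infty([-1,1])$'' is generally incompatible with the pressure-matching condition unless $u_{0,n}(0)$ takes a very particular value; you want $u_{0,n}$ piecewise smooth with a jump at $0$. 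So either supply the transmission-regularity argument explicitly (including the compatibility conditions), or smooth the interface as the paper does; the latter is what buys the paper an entirely classical setting in which the maximum principle for the flux is immediate.
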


All the section will be devoted to the proof of the theorem~\ref{regularity_graph}. 
As in section~\ref{existence}, we will get this existence result by taking 
the limit of a sequence of solutions to approximate problems~(\ref{Pn}) involving no 
capillary barriers, whose data fulfill the properties stated in lemma~\ref{phiin}.

\begin{proof}
We will now build a sequence of approximate initial data $(u_{0,n})$ adapted to the sequence of approximate problems.
\begin{lem}\label{reg_u0}
Let $u_0$ be chosen as in theorem~\ref{regularity_graph}, then there exists $(u_{0,n})_n$ such that, for all $n$,
\begin{itemize}
\item $0\le u_{0,n} \le 1$,
\item $\pi_{1,n}(u_{0,n,1})=\pi_{2,n}(u_{0,n,2})$ on $\G$.
\end{itemize}
The sequence $(u_{0,n})_n$ furthermore fulfills:
\begin{equation}\label{conv_u0n}
 \lim_{n\rightarrow\infty} \| u_{0,n}-u_0\|_\infty= 0, \qquad \|\partial_x \phiin(u_{0,n})\|_{L^\infty(\O_i)} 
 \le \|\partial_x \phii(u_{0})\|_{L^\infty(\O_i)} .
 \end{equation}
\end{lem}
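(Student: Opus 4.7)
The plan is to build $u_{0,n}$ by composing $u_{0}$ with a well-chosen modification of $\phii$, so that the interface condition is enforced exactly while the Lipschitz bound on $\phii(u_0)$ is preserved. Concretely, I would set
$$
u_{0,n,i}\;:=\;\phiin^{-1}\!\circ T\bigl(\phii(u_{0,i})+c_{n,i}\bigr),\qquad i=1,2,
$$
with a small additive shift $c_{n,i}$ chosen to adjust the interface trace, and $T$ the truncation onto the admissible range $[0,\phii(1)]$. In dimension one, the assumption $\phii(u_0)\in W^{1,\infty}(\O_i)$ provides continuity up to the boundary, so the one-sided traces $a:=u_{0,1}(0)$ and $b:=u_{0,2}(0)$ are well-defined, and by hypothesis I can fix a finite $p^\star\in\t\pi_1(a)\cap\t\pi_2(b)$.

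First I would construct $a_n,b_n\in[0,1]$ with $\pi_{1,n}(a_n)=\pi_{2,n}(b_n)=p^\star$. Since $\pi_{i,n}(0)\to-\infty$ and $\pi_{i,n}(1)\to+\infty$, $p^\star$ lies in the range of each $\pi_{i,n}$ for $n$ large, so I define $a_n:=\pi_{1,n}^{-1}(p^\star)$, $b_n:=\pi_{2,n}^{-1}(p^\star)$. The convergence $a_n\to a$ (symmetrically $b_n\to b$) splits into three cases: if $a\in\,]0,1[$, then $p^\star=\pi_1(a)$ and uniform convergence of $\pi_{1,n}\to\pi_1$ on compact subsets of $\,]0,1[$ gives the claim by a standard inverse-function argument; if $a=0$, then $p^\star\le\a_1$ and, for any $\eps>0$, $\pi_{1,n}(\eps)\to\pi_1(\eps)>p^\star$, forcing $a_n<\eps$ eventually; the case $a=1$ is symmetric.

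Next, setting $c_{n,i}:=\phiin(a_n)-\phii(u_{0,i})(0)$ and recalling that $\phiin([0,1])=\phii([0,1])$ by lemma~\ref{phiin}, the above definition is well-posed and automatically yields $0\le u_{0,n,i}\le 1$. The verifications are then direct: at $x=0$ the argument of $T$ equals $\phiin(a_n)\in[0,\phii(1)]$, so $T$ acts trivially and $u_{0,n,1}(0)=a_n,\ u_{0,n,2}(0)=b_n$, which gives the required interface condition; the $1$-Lipschitz character of $T$ provides
$$
\|\partial_x\phiin(u_{0,n,i})\|_{L^\infty(\O_i)}\le\|\partial_x\phii(u_{0,i})\|_{L^\infty(\O_i)};
$$
and since $c_{n,i}=\phiin(a_n)-\phii(a)\to 0$ by uniform convergence of $\phiin$ together with $a_n\to a$, we get $\|T(\phii(u_{0,i})+c_{n,i})-\phii(u_{0,i})\|_\infty\le|c_{n,i}|\to 0$, which combined with the uniform convergence $\phiin^{-1}\to\phii^{-1}$ on $[0,\phii(1)]$ (a consequence of the uniform convergence $\phiin\to\phii$ and the continuity of $\phii^{-1}$) produces $\|u_{0,n,i}-u_{0,i}\|_\infty\to 0$.

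The subtlest point is the second step, namely showing $a_n\to a$ (and $b_n\to b$) in the degenerate cases $a\in\{0,1\}$, where the graphs $\t\pi_i$ are genuinely multivalued; this is precisely why the approximations $\pi_{i,n}$ have been chosen in lemma~\ref{phiin} so as to blow up at the endpoints $0$ and $1$.
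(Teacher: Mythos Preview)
Your construction is correct and is essentially the same as the paper's: the authors also set $u_{0,n}(x)=\phiin^{-1}\!\left(T_{\phii}\bigl[\phii(u_0)+\phiin(a_{i,n})-\phii(u_{0,i}(0))\bigr]\right)$ with $T_{\phii}$ the truncation onto $[0,\phii(1)]$, and observe that $\partial_x\phiin(u_{0,n})$ equals either $\partial_x\phii(u_0)$ or $0$. The only difference is that you give an explicit construction of the interface values (via $a_n=\pi_{1,n}^{-1}(p^\star)$, $b_n=\pi_{2,n}^{-1}(p^\star)$ for a fixed finite $p^\star\in\t\pi_1(a)\cap\t\pi_2(b)$) together with a case-by-case proof of $a_n\to a$, $b_n\to b$, whereas the paper simply asserts the existence of such $(a_{1,n},a_{2,n})$ with the required convergence.
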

\begin{proof}
Since $\t\pi_1(u_{0,1})\cap\t\pi_2(u_{0,2})\neq \emptyset $, then there exists $(a_{1,n}, a_{2,n})\in [0,1]^2$ 
such that one has $\pi_{1,n}(a_{1,n})=\pi_{2,n}(a_{2,n})$ and $|a_{1,n}-u_{0,1}|+ |a_{2,n}-u_{0,2}|\rightarrow 0$.
One sets, for $x\in\O_i$: 
$$u_{0,n}(x)=\phiin^{-1}\left(T_{\phii}\left[\phii(u_0)+\phiin(a_{i,n})-\phii(u_{0,i})\right]\right)
$$
where 
$$T_{\phii}(s)=\left\{\begin{array}{rcl}
s&\text{ if }& s\in[0,\phii(1)]=[0,\phiin(1)],\\
\phiin(1)&\text{ if }& s>\phii(1),\\
0&\text{ if }& s<0.
\end{array}\right.$$
Then the sequence $(u_{0,n})$ converges uniformly toward $u_0$. For all $n$, $0\le u_{0,n}\le1$ and 
 either $\partial_x\phiin(u_{0,n})=\partial_x \phii(u_0)$, or $\partial_x\phiin(u_{0,n})=0$. 
 \end{proof}\\[10pt]
 
The approximate problem (\ref{Pn}) admits a unique solution  $u_n$ thanks 
to~\cite{NoDEA}, which belongs to $C([0,T],L^1(\O))$. Now, in order to get a  $L^\infty(\Qq_i)$-estimate on 
the sequence $(\partial_x\phiin(u_n))_n$, we  
 introduce a new family of approximate problems~(\ref{Pnk}) for 
 which the spatial dependence of the data is smooth.

Let $\theta \in C^\infty(\R), 0\le \theta\le 1 $, with $\theta(x)= 0$ if $x<-1$, and $\theta(x)=1$ if $x>1$. 
Let $k\in\N^\star$, one sets:
\begin{itemize}
\item $\phi^{k}(x)=(1-\theta(kx))\phi_1+\theta(kx)\phi_2$,
\item $ \l_{n,k}(s,x)=(1-\theta(kx))\lambda_{1,n}(s)+\theta(kx)\lambda_{2,n}(s),$
\item $ \pi_{n,k}(s,x)=(1-\theta(kx))\pi_{1,n}(s)+\theta(kx)\pi_{2,n}(s).$
\end{itemize}

We will now take a new approximation of the initial data.
$$u_{0,n,k}(x)=\left\{
\begin{array}{rcl}
u_{0,n}\left(\frac{k}{k-1}\left(x+\frac{1}{k}\right)\right) & \text{ if } & x<-1/k,\\
u_{0,n}\left(\frac{k}{k-1}\left(x-\frac{1}{k}\right)\right) & \text{ if } & x>1/k.\\
\end{array}\right.$$
In the layer $[-1/k,1/k]$, $u_{0,n,k}$ is defined by the relation
$$(1-\theta(kx))\pi_{1,n}(u_{0,n,k}(x))+\theta(kx)\pi_{2,n}(u_{0,n,k}(x))=\pi_{1,n}(a_{1,n})
=\pi_{2,n}(a_{2,n}),$$
so that the approximate capillary pressure $\pi_{n,k}(u_{0,n,k},\cdot)$ is constant through the layer.

Moreover one has  either $$ \l_{n,k}(u_{0,n,k},x)\partial_x (\pi_{n,k}(u_{0,n,k},x))=
\frac{k}{k-1}\partial_x\phiin(u_{0,n})\qquad \textrm{ if }|x|>\frac{1}{k},$$ or 
$$\partial_x 
(\pi_{n,k}(u_{0,n,k},x))=0 \qquad
\textrm{ if }|x|<\frac{1}{k}.$$
 So we directly deduce from the definition of $u_{0,n,k}$ 
the following lemma:
\begin{lem}\label{u_0nk}
Let $n\ge1$, $0\le u_{0,n}\le 1$ with $\phiin(u_{0,n}) \in W^{1,\infty}(\O_i)$ 
and $\pi_{1,n}(u_{0,n,1})=\pi_{2,n}(u_{0,n,2})$, then there exists a sequence 
$(u_{0,n,k})_k$ satisfying, for all $k\ge2$, that $0\le u_{0,n,k} \le 1$ and 
$$\| \l_{n,k}(u_{0,n,k},\cdot)\partial_x (\pi_{n,k}(u_{0,n,k},\cdot)) 
\|_\infty \le 
2\max_{i=1,2}(\|\partial_x\phiin(u_{0,n})\|_\infty), $$
$$u_{0,n,k}\rightarrow u_{0,n}\text{ in } L^1(\O) \text{ as } k\rightarrow +
\infty.
$$
\end{lem}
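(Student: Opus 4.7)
\textbf{Proof plan for Lemma \ref{u_0nk}.} My plan is to verify each of the three assertions directly from the explicit piecewise definition of $u_{0,n,k}$ given just above the statement, distinguishing the regions $|x|>1/k$ (where the coefficients are single-phase) and the central layer $|x|\le 1/k$ (where they are genuinely interpolated).

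First, I would check that $u_{0,n,k}$ is well-defined with values in $[0,1]$. Outside the layer this is immediate, since $u_{0,n,k}$ is obtained from $u_{0,n}\in[0,1]$ by composition with an affine change of variables mapping $[1/k,1]$ onto $[0,1]$ (and symmetrically on the left). Inside the layer, $u_{0,n,k}(x)$ is defined implicitly by
\[
(1-\theta(kx))\pi_{1,n}(s)+\theta(kx)\pi_{2,n}(s)=c,\qquad c:=\pi_{1,n}(a_{1,n})=\pi_{2,n}(a_{2,n}).
\]
The left-hand side is a continuous, strictly increasing function of $s\in[0,1]$ by Lemma \ref{phiin}, and its endpoint values $\pi_{1,n}(0)=\pi_{2,n}(0)$ and $\pi_{1,n}(1)=\pi_{2,n}(1)$ (also from Lemma \ref{phiin}) bracket $c$, so the equation has a unique solution $u_{0,n,k}(x)\in[0,1]$.

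Second, the flux estimate splits according to the same regions. For $x>1/k$ one has $\theta(kx)=1$, hence $\l_{n,k}(s,x)=\l_{2,n}(s)$ and $\pi_{n,k}(s,x)=\pi_{2,n}(s)$, while $u_{0,n,k}(x)=u_{0,n}(y)$ with $y=\frac{k}{k-1}(x-\tfrac1k)$. Using $\varphi_{2,n}(s)=\int_0^s\l_{2,n}(a)\pi_{2,n}'(a)\,da$, the chain rule yields
\[
\l_{n,k}(u_{0,n,k}(x),x)\,\partial_x\pi_{n,k}(u_{0,n,k}(x),x)=\partial_x\varphi_{2,n}(u_{0,n,k}(x))=\tfrac{k}{k-1}\,\partial_y\varphi_{2,n}(u_{0,n}(y)),
\]
which is bounded in absolute value by $\tfrac{k}{k-1}\|\partial_x\varphi_{2,n}(u_{0,n})\|_\infty\le 2\max_{i=1,2}\|\partial_x\varphi_{i,n}(u_{0,n})\|_\infty$ as soon as $k\ge 2$. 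The region $x<-1/k$ is symmetric, and inside the layer $\pi_{n,k}(u_{0,n,k}(x),x)\equiv c$ by construction, so its $x$-derivative vanishes and the bound is trivial.

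Third, for the $L^1$ convergence the central layer contributes at most $4/k\to 0$ since both functions take values in $[0,1]$. On $[1/k,1]$, the change of variables $y=\frac{k}{k-1}(x-\tfrac1k)$ gives
\[
\int_{1/k}^1|u_{0,n,k}(x)-u_{0,n}(x)|\,dx=\tfrac{k-1}{k}\int_0^1\bigl|u_{0,n}(y)-u_{0,n}(\tfrac{k-1}{k}y+\tfrac1k)\bigr|\,dy,
\]
which tends to $0$ by $L^1$-continuity of translations, using the uniform bound $\bigl|\tfrac{k-1}{k}y+\tfrac1k-y\bigr|=\tfrac{1-y}{k}\le\tfrac1k$ on $[0,1]$; the interval $[-1,-1/k]$ is treated analogously. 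The only mildly delicate point is the well-posedness of the implicit definition on the central layer, which hinges on the monotonicity and the endpoint matching furnished by Lemma \ref{phiin}; once these are invoked, the remaining assertions reduce to routine chain-rule and change-of-variable bookkeeping.
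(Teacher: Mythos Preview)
Your proposal is correct and follows exactly the paper's approach: the paper places the explicit construction of $u_{0,n,k}$ and the two key observations (that the flux equals $\tfrac{k}{k-1}\partial_x\varphi_{i,n}(u_{0,n})$ for $|x|>1/k$ and that $\partial_x\pi_{n,k}(u_{0,n,k},x)=0$ in the layer) immediately \emph{before} the lemma, and then simply says the lemma is ``directly deduce[d] from the definition of $u_{0,n,k}$.'' You have reproduced precisely these computations, and usefully supplied the details the paper omits --- the well-posedness of the implicit definition in the layer via strict monotonicity and the endpoint matching from Lemma~\ref{phiin}, and the explicit $L^1$-convergence argument.
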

For any fixed $k\ge2$ and $n$ large enough, we can 
now introduce the smooth non-degenerate parabolic problem (\ref{Pnk}):
\begin{equation}\label{Pnk}
\left\{\begin{array}{l}
\phi^{k}(x)\partial_t u_{n,k}-\partial_x (\l_{n,k}( u_{n,k},x)\partial_x \pi_{n,k}( u_{n,k},x))=0,   \\
\partial_x u_{n,k}(-1,t)=\partial_x u_{n,k}(1,t)=0,\\
 u_{n,k}(x,0)=u_{0,n,k}(x).
\end{array}\right.\end{equation}
Moreover, one  can furthermore suppose, up to a new regularization, that $u_{0,n,k}\in C^\infty([-1,1])$.
Then  (\ref{Pnk}) admits a unique strong solution $u_{n,k}\in 
C^\infty([0,T]\times [-1,1])$ (see for instance \cite{F64,LSU}).

Now one sets $f_{n,k}(x,t)=\l_{n,k}( u_{n,k},x)\partial_x \pi_{n,k}( u_{n,k},x)$, so the main equation of 
(\ref{Pnk}) can be rewritten:
$$
\phi^{k}\partial_t u_{n,k}=\partial_x f_{n,k}.
$$
A short calculation shows that $f_{n,k}(x,t)$ is the solution of the problem:
\begin{equation}\label{Pb_flux}
\left\{\begin{array}{l}
\partial_t f_{n,k}= a_{n,k} \partial_{xx}^2 f_{n,k}+b_{n,k} \partial_{x} f_{n,k},\\
f_{n,k}(-1,t)=f_{n,k}(1,t)=0,\\
f_{n,k}(x,0)=\l_{n,k}(u_{0,n,k},\cdot)\partial_x (\pi_{n,k}(u_{0,n,k},\cdot)),
\end{array}\right.
\end{equation}
where $a_{n,k},b_{n,k}$ are the regular functions defined below.
$$a_{n,k}=\l_{n,k}(u_{n,k},x)\frac{(\pi_{n,k})'(u_{n,k},x)}{\phi^{k}(x)}>0,$$
$$b_{n,k}=(\l_{n,k})'(u_{n,k},x)\frac{\partial_x [\pi_{n,k}(u_{n,k},x)]}{\phi^{k}(x)}
+ \l_{n,k}(u_{n,k},x)\partial_x\left[\frac{(\pi_{n,k})'(u_{n,k},x)}{\phi^{k}(x)}\right] .
$$
The fact that $u_{0,n,k}$ is supposed to be regular allows us to write the problem~(\ref{Pb_flux}) in a strong  
sense (this is necessary, because this problem can not be written in a conservative form).
In particular,
$f_{n,k}$ satisfies  the maximum principle, and thus
$$\| f_{n,k} \|_{L^\infty((-1,1)\times(0,T))} \le \| \l_{n,k}(u_{0,n,k},\cdot)\partial_x (\pi_{n,k}(u_{0,n,k},\cdot))
 \|_{L^\infty(-1,1)}.
$$
Thanks to the lemmas \ref{u_0nk} and \ref{reg_u0}, we have a uniform bound on $(f_{n,k})$:
\begin{equation}\label{maxPnk}
\| f_{n,k} \|_{L^\infty((-1,1)\times(0,T))} \le 2 \max_{i=1,2}(\|\partial_x\phii(u_0)\|_\infty).
\end{equation}

Since the problem~(\ref{Pnk}) is fully non degenerated (recall that $
\l_{i,n}> \frac{1}{2n^2}$  and $\pi_{i,n}' \ge \ds\frac{1}{n}$)
it follows that  $\partial_x u_{n,k}$ and $\partial_t u_{n,k}$ are uniformly bounded respectively in $L^\infty(\Qq_i)$ 
and in $L^2(0,T:H^{-1}(\O_i))$ with respect to  $k$, then the sequence $(u_{n,k})_k$ converges toward $u_n$ in 
$L^2(\Qq_i)$, and the limit $u_n$ fulfills, thank to estimate~(\ref{maxPnk}):
\begin{equation}\label{maxPn}
\| \partial_x \phiin(u_n)\|_{L^\infty(\Qq_i)} \le 2 \max_{i=1,2}(\|\partial_x
\phii(u_0)\|_\infty).
\end{equation}

One has for all $\psi\in\Dd([-1,1]\times [0,T[)$,
\begin{equation}\label{weak_kn}
\int_0^T \int_{-1}^1\phi^{k} u_{n,k}\partial_t \psi+  \int_{-1}^1\phi^{k} 
u_{0,n}^{k} \psi_0 -
\int_0^T \int_{-1}^1 f_{n,k}\partial_x\psi=0.
\end{equation}
Thanks to (\ref{maxPnk}), $$\lim_{k\rightarrow+\infty}\int_0^T \int_{-\frac{1}
{k}}^{\frac{1}{k}} f_{n,k}\partial_x\psi=0.$$
One has $u_{n,k}\rightarrow u_n$ in the $L^\infty(\Qq)$-weak-
$\star$ and $L^2(\Qq)$ senses, $u_{0,n,k}\rightarrow u_{0,n}$ in $L^1(-1,1)$ thanks to 
lemma~\ref{u_0nk}. Moreover, thanks to estimate~(\ref{maxPnk}), 
$\partial_x\pi_{i,n,k}(u_{n,k})\rightarrow \partial_x\pi_{i,n}(u_{n,k})$ in the $L^\infty(\Qq)$-weak-
$\star$ sense.
Thus we can let $k$ tend toward $+\infty$ in~(\ref{weak_kn}) to get
\begin{equation}\label{weak_n}
\int_0^T \sum_{i=1,2}\int_{\O_i} \phi_i u_n\partial_t \psi+ 
\sum_{i=1,2}\int_{\O_i}\phi_i u_{0,n}\psi_0 -
\int_0^T \sum_{i=1,2}\int_{\O_i} \l_{i,n}(u_n)\partial_x \pi_{i,n}(u_n)
\partial_x\psi=0.
\end{equation}
Furthermore, using the fact that   $\pi_{n,k}(u_{n,k},x)$ belongs to $L^2(0,T; H^1(\O))$ and, even more, that  
$\partial_x(  \pi_{n,k}(u_{n,k},x) ) $ is   bounded uniformly in $k$,  we can claim that 
$\pi_{1,n}(u_{1,n})=\pi_{2,n}(u_{2,n})$, and so $u_n$ is the unique weak 
solution to the approximate problem~(\ref{Pn}) for $u_{0,n}$ as initial data.

When $n$ tends toward $+\infty$, the sequence $(u_n)_n$ converges, up to a 
subsequence toward a weak solution to the problem~(\ref{P}), as seen in section~\ref{existence}, 
but the estimate~(\ref{maxPn})  insures that  
$$ \partial_x \phii(u)\in
L^\infty(\Qq_i).$$ 
This achieves the proof of theorem~\ref{regularity_graph}.
\end{proof}

% UNIQUENESS
\section{A uniqueness result}\label{unicite}

In this section, we give a uniqueness result in the one dimensional case 
in a
framework where the existence results 
are stronger than the general existence result stated in
theorem~\ref{thm_existence}. 
Under a regularity assumption on the initial data $u_0$, we proved in
section~\ref{section_regularity} the existence of a  solution having 
bounded flux, for which we give a uniqueness result in 
theorem~\ref{prop_unicite} and corollary~\ref{cor_uni}. The bound on the flux will be necessary to prove that the contraction
property is also available in the neighborhood of the interface $\{x=0\}$.
Then we show in
theorem~\ref{existence_SOLA} the existence and uniqueness of the weak
solution which is the limit of  bounded flux solutions for any initial data $u_0$
with $0\le u_0\le 1$. 
Indeed, the set of initial data giving a bounded flux solution is dense in $L^\infty(\O)$ for the $L^1(\O)$ topology, 
and theorem~\ref{prop_unicite} has for consequence that the contraction property can be extended to a larger class of 
solution, defined for all initial data in $L^\infty(\O)$.
We unfortunately are not
able to characterize them differently than by  a limit of bounded flux solutions, and we can not either exhibit a weak
solution which is not the limit of bounded flux solutions.

\begin{thm}[$L^1$-contraction principle for bounded flux solutions]\label{prop_unicite}
Let $u,v$  be two weak solutions to the problem~(\ref{P}) for the initial 
data $u_0, v_0$. Then, if $\partial_x\phii(u_i)$ and $\partial_x\phii(v_i)$ belong to 
$L^\infty(\Qq_i)$, we have the following 
$L^1$-contraction principle: $\forall t\in [0,T]$,
\begin{equation}\label{contract}
\sum_{i=1,2}\int_{\O_i} \phi_i \left(u(x,t)-v(x,t)\right)^\pm dx \le \sum_{i=1,2} 
\int_{\O_i} \phi_i \left(u_{0}(x)-v_{0}(x)\right)^\pm dx.
\end{equation}
\end{thm}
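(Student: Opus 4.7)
The plan is to use a Kato-type inequality on each subdomain, with careful handling of the contribution from the interface $\{x=0\}$. Let $u, v$ be two bounded-flux weak solutions. The hypothesis $\partial_x \varphi_i(u_i), \partial_x \varphi_i(v_i) \in L^\infty(\Qq_i)$ makes $\varphi_i(u_i)$ and $\varphi_i(v_i)$ Lipschitz in $x$ uniformly in $t$, so they admit traces at $x=0$ (for a.e.\ $t$) and, via remark~\ref{phii_invers}, so do $u_i, v_i$. The interface flux condition in~\eqref{P} can then be written pointwise (a.e. in $t$): $F_u(t) := \partial_x\varphi_1(u_1)(0^-,t)=\partial_x\varphi_2(u_2)(0^+,t)$, and similarly for $F_v(t)$; both lie in $L^\infty(0,T)$. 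On each $\O_i$ the equation $\phi_i\partial_t(u_i-v_i)=\partial_{xx}(\varphi_i(u_i)-\varphi_i(v_i))$ holds in the strong sense of distributions.

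Next, I would introduce a smooth non-decreasing regularization $S_\eps$ of $\textrm{sign}^+$ and test the subtracted equation on $\O_i\times(0,t)$ with $\psi_\eps^{(i)}:=S_\eps(\varphi_i(u_i)-\varphi_i(v_i))$. Since $\varphi_i$ is strictly increasing, $\textrm{sign}^+(u_i-v_i)=\textrm{sign}^+(\varphi_i(u_i)-\varphi_i(v_i))$, so a standard Alt--Luckhaus / Kato argument gives
$$\lim_{\eps\to0}\int_0^t\!\!\int_{\O_i}\phi_i\partial_s(u_i-v_i)\psi_\eps^{(i)}\,dx\,ds=\int_{\O_i}\phi_i(u_i-v_i)^+(t)\,dx-\int_{\O_i}\phi_i(u_{0,i}-v_{0,i})^+\,dx.$$
Integrating the diffusive side by parts yields a non-negative interior term (because $S_\eps'\geq 0$ and $(\partial_x(\varphi_i(u_i)-\varphi_i(v_i)))^2\geq 0$) plus boundary contributions. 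The Neumann condition kills the boundary terms at $x=\pm 1$. Summing the two subdomains, using flux continuity $F_u(0^-,t)=F_u(0^+,t)$ and $F_v(0^-,t)=F_v(0^+,t)$, and sending $\eps\to0$, we obtain
$$\SiN\int_{\O_i}\phi_i(u-v)^+(t)\,dx-\SiN\int_{\O_i}\phi_i(u_0-v_0)^+\,dx\ \leq\ \int_0^t (F_u-F_v)(A_1-A_2)\,ds,$$
where $A_i(t)\in[0,1]$ is the a.e.\ limit of $\psi_\eps^{(i)}$ at $x=0$, equal to $\mathbf{1}_{\{u_i>v_i\}}$ whenever $u_i(0,t)\neq v_i(0,t)$.

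It remains to check that the interface integrand is nonpositive a.e. Here the graph condition is essential: choose selections $p_u\in\t\pi_1(u_1(0^-,t))\cap\t\pi_2(u_2(0^+,t))$ and $p_v\in\t\pi_1(v_1(0^-,t))\cap\t\pi_2(v_2(0^+,t))$. Maximal monotonicity of the graphs $\t\pi_i$ yields $(u_i^u-u_i^v)(p_u-p_v)\geq 0$ for $i=1,2$, so the signs of $u_1^u-u_1^v$ and $u_2^u-u_2^v$ coincide. In the generic case (where at least one of the $u_i^u, u_i^v$ lies in $(0,1)$ for both $i$), strict monotonicity of $\pi_i$ forces $A_1=A_2$ and the contribution vanishes. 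The only remaining case is when $\{u_1^u>u_1^v\}$ but $u_2^u=u_2^v\in\{0,1\}$ (or symmetric).

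The main obstacle is precisely this last degenerate case. The strategy is to use that $\l_i$ vanishes at $\{0,1\}$: going back to the smooth approximations $(u_n,v_n)$ constructed in the proof of Theorem~\ref{regularity_graph}, one has $\pi_{1,n}(u_{1,n})=\pi_{2,n}(u_{2,n})$ and $\pi_{1,n}(v_{1,n})=\pi_{2,n}(v_{2,n})$ at $x=0$, so the analogous interface term for $(u_n,v_n)$ is identically zero. Passing to the limit, using the uniform flux bound \eqref{maxPn} and the degeneracy $\l_{2,n}(u_{2,n})\to 0$ as $u_{2,n}\to 0$, shows that the contribution of the degenerate set to the interface integral vanishes. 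Having established nonpositivity of the interface term, we obtain \eqref{contract} for $(u-v)^+$; the bound for $(u-v)^-$ follows by swapping $u$ and $v$.
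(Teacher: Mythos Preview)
Your overall strategy---a Kato inequality on each subdomain plus an interface analysis driven by the monotonicity~\eqref{monotony} coming from the graph condition---is the right idea, and up to the ``generic'' case your computation is essentially correct. The gap is in the last paragraph, where you handle the degenerate interface configurations.

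You propose to dispose of the case $u_1(0,t)>v_1(0,t)$, $u_2(0,t)=v_2(0,t)\in\{0,1\}$ by ``going back to the smooth approximations $(u_n,v_n)$ constructed in the proof of Theorem~\ref{regularity_graph}''. But Theorem~\ref{prop_unicite} is stated for \emph{arbitrary} bounded--flux weak solutions, not only for those produced by that particular approximation. Since uniqueness is exactly what you are proving, you are not entitled to assume that $u$ and $v$ arise as such limits; the argument is circular. Moreover, even granting the approximation, the claim that ``the degeneracy $\l_{2,n}(u_{2,n})\to 0$'' kills the contribution is not justified: your interface integrand is $(F_u-F_v)(A_1-A_2)$, a difference of \emph{fluxes}, which carries no visible factor of $\l_2$ that would vanish. (There are also further degenerate cases you do not list, e.g.\ $u_1(0,t)=v_1(0,t)$ with $u_2(0,t)\neq v_2(0,t)$, in which $A_1$ is undetermined.)

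The paper avoids this case analysis altogether by organizing the interface term differently. Instead of tracking the Kato boundary contribution $(F_u-F_v)(A_1-A_2)$, it tests with the spatial cutoff $\psi_\eps(x)=(1-|x|/\eps)^+$ and restricts to the set $E_{u>v}=\{t:\ u_1(0,t)>v_1(0,t)\ \text{or}\ u_2(0,t)>v_2(0,t)\}$. The weak formulation~\eqref{gws_eq} (flux continuity across $\Gamma$) gives, for smooth $\vartheta$,
\[
\lim_{\eps\to 0}\int_0^T \vartheta(t)\sum_{i}\int_{\O_i}\partial_x\bigl(\varphi_i(u)-\varphi_i(v)\bigr)\,\partial_x\psi_\eps\,dx\,dt=0,
\]
and the $L^\infty$ bound on the fluxes (this is precisely where it is used) lets one extend this to $\vartheta=\theta\,\1_{E_{u>v}}\in L^1(0,T)$. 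Splitting $\varphi_i(u)-\varphi_i(v)$ into positive and negative parts, the monotonicity~\eqref{monotony} forces $u_i(0,t)\ge v_i(0,t)$ for \emph{both} $i$ on $E_{u>v}$, so $(\varphi_i(u)-\varphi_i(v))^-$ has zero trace at $x=0$ there and its integral against $\partial_x\psi_\eps$ is $\le 0$. Hence the positive--part integral is $\le 0$ in the limit, with no case distinction on $\{0,1\}$--valued traces. Replacing your final paragraph by this argument closes the gap.

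A smaller technical point: your use of $S_\eps(\varphi_i(u_i)-\varphi_i(v_i))$ as a test function in the subtracted equation is not immediately justified, since the equation is only available in distributional form and the test function depends on both unknowns; the paper handles this via doubling of the time variable (as in \cite{Otto96,Car99}) to reach~\eqref{entro+} before introducing the spatial cutoff.
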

The first part of this section is devoted to the proof of the theorem \ref{prop_unicite} which, 
 with theorem~\ref{regularity_graph}, admits the following straightforward 
consequence:
\begin{coro}[Uniqueness of the bounded flux solution]\label{cor_uni}
For all  $u_0\!\in \!L^{\infty}(-1,1)$ with $0\le u_0\le 1$, such that, for $i=1,2$, 
$\phii(u_0)\in W^{1,\infty}(\O_i)$, and   $\t\pi_1(u_{0,1})\cap\t
\pi_2(u_{0,2})\neq\emptyset$, there exists a unique weak solution 
to the problem~(\ref{P}) in the sense of definition~\ref{gws} and such that  $\partial_x\phii(u)\in L^\infty(\Qq_i)$;  
moreover $u\in C([0,T],L^p(\O))$ for all $1\le p < +\infty$.
\end{coro}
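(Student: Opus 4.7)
\emph{Existence and uniqueness.} The hypotheses on $u_0$ in the corollary coincide with those of Theorem~\ref{regularity_graph}, which immediately produces a weak solution $u$ with $\partial_x\phii(u_i) \in L^\infty(\Qq_i)$. For uniqueness, let $u,v$ be two bounded flux solutions sharing the same initial datum $u_0$; applying Theorem~\ref{prop_unicite} in both the $+$ and $-$ forms of~(\ref{contract}), with $v_0 = u_0$, gives
$$\sum_{i=1,2}\int_{\O_i}\phi_i (u(x,t)-v(x,t))^\pm dx \le 0$$
for every $t \in [0,T]$, hence $u = v$ a.e.\ in $\OT$.

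\emph{Time continuity plan.} I plan to prove $\phii(u_i) \in C([0,T];C(\overline\O_i))$ for $i=1,2$; composing with the uniformly continuous $\phii^{-1}\colon [0,\phii(1)]\to [0,1]$ then gives $u \in C([0,T];L^\infty(\O))$, and the trivial embedding $L^\infty(\O)\hookrightarrow L^p(\O)$ (valid since $\O$ is bounded) yields $u\in C([0,T];L^p(\O))$ for every $1\le p<\infty$. The argument has three steps. First, fix $\xi\in C^\infty([-1,1])$ and test (\ref{gws_eq}) with $\psi(x,t)=\xi(x)\chi(t)$ for $\chi\in\Dd([0,T))$. Setting
$$F_\xi(t)=\sum_{i=1,2}\int_{\O_i}\phi_i u_i(x,t)\xi(x)\,dx, \quad G_\xi(t)=\sum_{i=1,2}\int_{\O_i}\partial_x\phii(u_i)(x,t)\,\xi'(x)\,dx,$$
the weak formulation reads $F_\xi'=-G_\xi$ in $\Dd'(0,T)$ with initial value $F_\xi(0)=\sum_i\int_{\O_i}\phi_i u_0\xi$. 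Since $G_\xi\in L^\infty(0,T)$ by the bounded flux assumption, $F_\xi$ admits a Lipschitz representative on $[0,T]$; no interface term appears at $\G=\{x=0\}$ because the transmission condition of~(\ref{P}) is already encoded in~(\ref{gws_eq}). Second, $\partial_x\phii(u_i)\in L^\infty(\Qq_i)$ together with $0\le\phii(u_i)\le\phii(1)$ gives a uniform bound on $\phii(u_i(\cdot,t))$ in $W^{1,\infty}(\O_i)$, so the family $\{\phii(u_i(\cdot,t))\}_{t\in[0,T]}$ is relatively compact in $C(\overline\O_i)$ by Ascoli--Arzel\`a. Third, for $t_n\to t_\star$ and any subsequential uniform limit $v_\star$ of $\phii(u_i(\cdot,t_{n_k}))$, the uniform continuity of $\phii^{-1}$ gives $u_i(\cdot,t_{n_k})\to\phii^{-1}(v_\star)$ uniformly, so for all $\xi\in C^\infty_c(\O_i)$,
$$\int_{\O_i}\phi_i\phii^{-1}(v_\star)\xi\,dx=\lim_k F_\xi(t_{n_k})=F_\xi(t_\star),$$
which identifies $\phii^{-1}(v_\star)$ with the essentially bounded representative of $u_i(\cdot,t_\star)$ and hence $v_\star$ with $\phii(u_i(\cdot,t_\star))$; uniqueness of subsequential limits then forces convergence of the whole sequence.

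The step I expect to be most delicate is the third: one must identify $v_\star$ unambiguously through duality against test functions, which rests entirely on the scalar continuity of each $F_\xi$ established in step one. Once this identification is in place, the remaining bookkeeping -- Ascoli on the $W^{1,\infty}$-bounded family, composition with the continuous $\phii^{-1}$, and the embedding into $L^p$ -- is routine thanks to the strong bounded flux assumption inherited from Theorem~\ref{regularity_graph}.
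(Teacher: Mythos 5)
Your existence and uniqueness steps are exactly the paper's argument: the paper states Corollary~\ref{cor_uni} as a ``straightforward consequence'' of Theorem~\ref{regularity_graph} (existence of a bounded flux solution) and Theorem~\ref{prop_unicite} (the $L^1$-contraction applied with $v_0=u_0$ in both the $+$ and $-$ forms), and that is precisely what you do. Where you genuinely diverge is the claim $u\in C([0,T],L^p(\O))$: the paper gives no argument for it at all inside the corollary (elsewhere it only records time continuity for the non-degenerate approximations of~(\ref{Pn}) and, in Theorem~\ref{existence_SOLA}, obtains continuity of the SOLA from the Cauchy property in $C([0,T];L^1(\O))$ furnished by the contraction principle). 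Your route --- deriving $F_\xi'=-G_\xi$ in $\Dd'(0,T)$ from~(\ref{gws_eq}), getting a Lipschitz representative of $F_\xi$ from $G_\xi\in L^\infty(0,T)$, then combining the uniform $W^{1,\infty}(\O_i)$ bound on $\phii(u_i(\cdot,t))$ with Ascoli--Arzel\`a and identifying all subsequential limits by duality --- is sound and self-contained, and it actually yields the stronger conclusion $u\in C([0,T];L^\infty(\O))$ (indeed $\phii(u_i)\in C([0,T];C(\overline\O_i))$), from which the stated $C([0,T];L^p(\O))$ follows since $\O$ is bounded. Two small points you should make explicit when writing it up: the $W^{1,\infty}(\O_i)$ bound on $\phii(u_i(\cdot,t))$ and the identity $F_\xi(t)=\sum_i\int_{\O_i}\phi_i u_i(\cdot,t)\xi$ hold only for a.e.\ $t$, so the compactness/identification argument is first run on that full-measure set of times and the continuous-in-time representative is then obtained by extension (the uniqueness of the limit, pinned down by the everywhere-defined Lipschitz $F_\xi$, is what makes this extension unambiguous); and in step three you should take $\xi$ ranging over $C^\infty_c(\O_i)$ for each $i$ separately, as you indeed do, so that $v_\star$ is identified on each subdomain. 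With those caveats your proposal is correct and supplies a justification the paper leaves implicit.
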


\begin{proof}
The proof of the theorem \ref{prop_unicite} is based on entropy inequalities, obtained through the  method of doubling variables, 
first introduced by S. Kru\u{z}kov~\cite{K70} for first order equations, and then 
adapted by  J. Carrillo~\cite{Car99} for degenerate parabolic problems.
Note that in the present setting, we only need doubling with respect to the time--variable, as it is done, for instance by F. Otto~ \cite{Otto96}  for elliptic--parabolic problems (or in  \cite{BP05} for Stefan--type problems).

In the sequel of the proof, we will only give the comparison
$$
\sum_{i=1,2}\int_{\O_i} \phi_i \left(u(x,t)-v(x,t)\right)^+ dx \le \sum_{i=1,2} 
\int_{\O_i} \phi_i \left(u_{0}(x)-v_{0}(x)\right)^+ dx.
$$
The comparison with $(\cdot)^-$ instead of $(\cdot)^+$ can be proven  exactly the
same way.

Let $u$ be a bounded flux solution to the one-dimensional problem, i.e 
$\partial_x\phii(u)\in L^\infty(\Qq_i)$, $i=1,2$. 
The weak formulation of definition~\ref{gws} adapted to the one-dimensional 
framework of the section can be rewritten, for all $\psi\in \Dd(\overline{\O} 
\times [0,T[)$,
\begin{equation}\label{weak_for_1d}
\begin{array}{c}
\ds\int_0^T\sum_{i=1,2}\int_{\O_i} \phi_i u(x,t)\partial_t \psi(x,t)dxdt 
+\sum_{i=1,2}\int_{\O_i} \phi_i u_0(x)\psi(x,0)dx\\
\ds- \int_0^T\sum_{i=1,2}\int_{\O_i} \partial_x\phii(u)(x,t)\partial_x
\psi(x,t)dxdt=0
\end{array}
\end{equation}
This formulation clearly implies, for $i=1,2$, for all $\psi\in C^\infty_c(\overline\O_i\times [0,T[)$ with $\psi(0,t)=0$, 
\begin{equation}\label{weak_i}\begin{array}{c}
\ds \int_0^T\int_{\O_i}  \phi_i u(x,t)\partial_t \psi(x,t)dxdt +
\int_{\O_i} \phi_i u_0(x)\psi(x,0)dx \\
\ds - \int_0^T\int_{\O_i} \partial_x\phii(u)(x,t)\partial_x
\psi(x,t)dxdt=0
\end{array}
\end{equation}
Classical computations (see e.g. \cite{BP05,Car99,Otto96}) on equation~(\ref{weak_i}) lead to the following entropy inequalities: 
for all  weak solutions $u,v$,  for initial data $u_0,v_0$, for all $\xi\in \Dd^+(\overline\O_i\times[0,T[\times[0,T[)$ such that $\xi(0,t,s)=0$,
\begin{equation}\label{entro_i}
\begin{array}{c}
\ds\int_0^T \int_0^T \int_{\O_i}\phi_i(u(x,t)-v(x,s))^+(\partial_t\xi(x,t,s)+\partial_s\xi(x,t,s))dxdtds\\
+\ds \int_0^T\!\int_{\O_i}\phi_i(u_0(x)-v(x,s))^+\xi(x,0,s)dxds \\
\ds + \int_0^T \int_{\O_i}\phi_i(u(x,t)-v_0(x))^+\xi(x,t,0)dxdt\\
-\ds \int_0^T \int_0^T \int_{\O_i}\partial_x(\phii(u)(x,t)-\phii(v)(x,s))^+\partial_x\xi(x,t,s)dxdtds\ge0.
\end{array}
\end{equation}

Let us note here an important consequence of the entropy inequality 
(\ref{entro_i}) (and of the 
corresponding one for $(u-v)^-$ ),  namely that 
$u$ can be proved to satisfy
\begin{equation}\label{ess-cont}
ess-\!\!\lim_{t\rightarrow 0}\,\, \int_{\O_i}|u(x,t)-u_0(x)|dx =0\,.
\end{equation}
Indeed, this follows by taking $v$ as a  constant in (\ref{entro_i}) and using   an approximation  argument, see e.g. Lemma 7.41 in~\cite{MNRR96}.  
We deduce the time continuity at $t=0$ for any  solution and in particular for both $u$ and $v$ taken above.

Now, let $\rho\in C^\infty_c(\R,\R^+)$ with $supp(\rho)\subset [-1,1]$ and 
$\int_{\R}\rho(t)dt=1$. One denotes $\rho_m(t)=m\rho(mt)$. 
Let $\psi\in \Dd^+([-1,1]\times[0,T[)$ with 
$\psi(0,\cdot)=0$. For $m$ large enough, $\xi(x,t,s)= \psi(x,t)\rho_m(t-s)$ belongs to $\Dd^+([-1,1]\times[0,T[\times[0,T[)$, and we can take it as test 
function in (\ref{entro_i}). Then summing on $i=1,2$ leads to
\begin{equation}\label{avant_cont}
\begin{array}{c}
\ds \int_0^T\int_0^T\sum_{i=1,2}\int_{\O_i} \phi_i (u(x,t)-v(x,s))^+\partial_t 
\psi(x,t)\rho_m(t-s)dxdtds\\
\ds + \int_0^T\sum_{i=1,2}\int_{\O_i} \phi_i (u_0(x)-v(x,s))^+\psi(x,0)
\rho_m(-s)dxds \\
\ds+\int_0^T\sum_{i=1,2}\int_{\O_i} \phi_i (u(x,t)-v_0(x))^+\psi(x,t)
\rho_m(t)dxdt\\
\ds - \int_0^T\int_0^T\sum_{i=1,2}\int_{\O_i} \partial_x 
(\phii(u)(x,t)-\phii(v)(x,s))^+\partial_x\psi(x,t)\rho_m(t-s)dxdtds\ge0.
\end{array}
\end{equation}
We can now let $m$ tend toward $+\infty$ in~(\ref{avant_cont}), and using  (\ref{ess-cont}) for $u$ and $v$, and  the
theorem of continuity in mean, we get: for all $\psi\in \Dd^+(\overline\O\times[0,T[)$ such that $\psi(0,t)=0$,
\begin{equation}\label{entro+}
\begin{array}{c}
\ds \int_0^T\sum_{i=1,2}\int_{\O_i} \phi_i (u(x,t)-v(x,t))^+\partial_t \psi(x,t)dxdt\\
\ds + \sum_{i=1,2}\int_{\O_i} \phi_i (u_0(x)-v_0(x))^+ \psi(x,0)dx \\
\ds - \int_0^T\sum_{i=1,2}\int_{\O_i} \partial_x (\phii(u)(x,t)-\phii(v)(x,t))^+
\partial_x\psi(x,t)dxdt\ge0.
\end{array}
\end{equation}

We aim now to extend the inequality~(\ref{entro+}) in the case where $\psi(0,t)\neq0$, and particularly in the case $\psi(x,t)=\theta(t)$, 
so that the third term disappears in~(\ref{entro+}).

To this purpose, let us  set here  $u_i(t)= u_i(0,t)$ to denote the trace of $u_i$ at the interface $\G$ (and correspondingly, $v_i(t)=v_i(0,t)$). 
We  introduce the subsets of $(0,T)$:
\begin{itemize}
\item $E_{u>v}=\{t\in[0,T]\  |\  u_1(t)>v_1(t) \text{ or } u_2(t)>v_2(t)\}$,
\item $E_{u \le v}=\{t\in[0,T]\  |\  u_1(t)\le v_1(t) \text{ and } u_2(t)\le v_2(t)\}$,
\end{itemize}
so that $E_{u \le v}$ is the complement of $E_{u>v}$ in $[0,T]$.

For all $\eps>0$, one defines $\psi_\eps(x)=\max\left(1-\frac{|x|}{\eps},0\right)$.
For all $\theta\in \Dd^+([0,T[)$, we take $(x,t)\mapsto \theta(t)
(1-\psi_\eps(x))$ instead of $\psi(x,t)$ as test-function in~(\ref{entro+}), 
thus we get:
\begin{equation*}
\begin{array}{c}
\ds \int_0^T\sum_{i=1,2}\int_{\O_i} \phi_i (u(x,t)-v(x,t))^+\partial_t 
\theta(t)(1-\psi_\eps(x))dxdt\\
\ds + \sum_{i=1,2}\int_{\O_i} \phi_i (u_0(x)-v_0(x))^+ 
(1-\psi_\eps)(x)\theta(0)dx \\
\ds -
\int_0^T\frac{\theta(t)}{\eps} \left(
\begin{array}{c}
 (\varphi_1(u)(-\eps,t)-\varphi_1(v)(-\eps,t))^+ -  
 (\varphi_1(u_1)(t)-\varphi_1(v_1)(t))^+\\
 + (\varphi_2(u)(\eps,t)-\varphi_2(v)(\eps,t))^+-  
 (\varphi_2(u_2)(t)-\varphi_2(v_2)(t))^+
 \end{array}
 \right)
  dt\ge0.
\end{array}
\end{equation*}
For almost every $t\in E_{u\le v}$, 
the function $(\phii(u)-\phii(v))^+(\cdot,t)$
admits a nil trace on $\{x=0\}$, thus the third term in the previous 
inequality can be reduced to the set $E_{u> v}$ obtaining
\begin{equation}\label{entro+1}
\begin{array}{c}
\ds \int_0^T \sum_{i=1,2}\int_{\O_i} \phi_i (u(x,t)-v(x,t))^+\partial_t 
\theta(t)(1-\psi_\eps(x))dxdt\\
\ds + \sum_{i=1,2}\int_{\O_i} \phi_i (u_0(x)-v_0(x))^+ (1-\psi_\eps)(x)\theta(0)dx \\
\ds +
\int_{E_{u>v}}{\theta(t)} \sum_{i=1,2}\int_{\O_i} \partial_x (\phii(u)(x,t)-
\phii(v)(x,t))^+\partial_x\psi_\eps(x) dxdt\ge0.
\end{array}
\end{equation}

%lemme clef
We  show now the crucial point of the uniqueness proof, which is the 
subject of the following lemma.
\begin{lem}\label{point_clef} For all $\theta\in\Dd^+([0,T[)$, if $u,v$ are both 
bounded flux solutions, i.e. if one has $\partial_x\phii(u),\partial_x\phii(v)\in 
L^\infty(\Qq_i)$ one has,
$$
\ds\limsup_{\eps\rightarrow 0} \int_{E_{u>v}}{\theta(t)} \sum_{i=1,2}\int_{\O_i} 
\partial_x (\phii(u)(x,t)-\phii(v)(x,t))^+\partial_x\psi_\eps(x) dxdt\le0.
$$
\end{lem}

Using the weak formulation~(\ref{weak_for_1d}), we can claim that for any regular function
$\vartheta\in\Dd([0,T[)$, 
\begin{equation}\label{vie}
\lim_{\eps\rightarrow 0} \int_0^T \vartheta(t)\sum_{i=1,2}\int_{\O_i}
\partial_x (\phii(u)-\phii(v))\partial_x\psi_\eps(x)dxdt=0.
\end{equation}
Since for $i=1,2$, $\partial_x (\phii(u)-\phii(v))$ belongs to $L^\infty(\OiT)$,
one has
$$
\left|\int_0^T \vartheta(t)\sum_{i=1,2}\int_{\O_i}
\partial_x (\phii(u)-\phii(v))\partial_x\psi_\eps(x)dxdt\right|\le
C\|\vartheta\|_{L^1(0,T)},
$$
then a density argument allows us to claim that (\ref{vie}) still holds for any
$\vartheta\in L^1(0,T)$, and particularly for
$\vartheta(t)=\theta(t)\1_{E_{u>v}}(t)$. Thus there exists $A(\eps)$ tending to $0$
as $\eps$ tends to $0$ such that
\begin{equation}\label{entro_trace2}
  \ds  \int_{E_{u>v}}\theta(t)\sum_{i=1,2}  \int_{\O_i} \partial_x(\phii(u)(x,t)-\phii(v)(x,t))\partial_x \psi_\eps(x) dx dt 
 =A(\eps).
 \end{equation} 
 
 Splitting up the positive and negative parts of $(\phii(u)(x,t)-\phii(v)
 (x,t))$, (\ref{entro_trace2}) becomes:
 \begin{equation}\label{entro_trace3}
\begin{array}{c}  \ds \int_{E_{u>v}}\theta(t) \sum_{i=1,2}  \int_{\O_i} 
\partial_x(\phii(u)(x,t)-\phii(v)(x,t))^+\partial_x \psi_\eps(x) dx dt \\
\ds
 =
 \int_{E_{u>v}}\theta(t)\sum_{i=1,2}  \int_{\O_i} 
 \partial_x(\phii(u)(x,t)-\phii(v)(x,t))^-\partial_x \psi_\eps(x) dx dt 
 +A(\eps).
 \end{array}
 \end{equation}
 It is at this point that we actually  use the monotony of the transmission condition, i.e. condition 3 in Definition \ref{gws}. Indeed, the  conditions $\t\pi_1(u_1(t))\cap\t\pi_2(u_2(t))\neq\emptyset$ and 
$\t\pi_1(v_1(t))\cap\t\pi_2(v_2(t))\neq\emptyset$ insure that :
\begin{equation}\label{monotony}
u_1>v_1 \implies u_2\ge v_2\qquad  \hbox{and }\qquad u_1<v_1 \implies u_2\le v_2\,.
\end{equation}
Therefore,  recalling the definition of the set $E_{u>v}$ and of $\psi_\eps$, the first term in the  right member  of~(\ref{entro_trace3}) is non-positive,  
 and then we conclude
$$
\ds\limsup_{\eps\rightarrow 0} \int_{E_{u>v}}{\theta(t)} \sum_{i=1,2}\int_{\O_i} 
\partial_x (\phii(u)(x,t)-\phii(v)(x,t))^+\partial_x\psi_\eps(x) dxdt\le0.
$$
This achieves the proof of lemma~\ref{point_clef}, and allows us to take the 
limit in inequality (\ref{entro+1}) for $\eps\rightarrow 0$. 
 Then for all $\psi\in\Dd^+([0,T[)$, one gets
 \begin{equation}\label{fin}
 -\int_0^T \sum_{i=1,2}\int_{\O_i} \phi_i (u(x,t)-v(x,t))^+\partial_t \psi(t)dxdt
 \le \sum_{i=1,2}\int_{\O_i} \phi_i (u_0(x)-v_0(x))^+\psi(0)dx .
 \end{equation}
One can also prove exactly the same way that
\begin{equation}\label{fin2}
-\int_0^T \sum_{i=1,2}\int_{\O_i} \phi_i (u(x,t)-v(x,t))^-\partial_t \psi(t)dxdt
 \le \sum_{i=1,2}\int_{\O_i} \phi_i (u_0(x)-v_0(x))^-\psi(0)dx .
\end{equation}
These inequalities still hold for $\psi=(T-t)$, and then if $u_0=v_0$, one has $u=v$
almost everywhere in $\Qq$. 
Moreover  we can take $\psi(t) =\1_{[0,s]} (t)$ as test function in (\ref{fin}) to get 
the $L^1$-contraction principle~(\ref{contract}) stated in theorem~\ref{prop_unicite}.
\end{proof}

In the sequel, we prove that for any $u_0$ in $L^\infty(-1;1)$, $0\le u_0 \le 1$, 
there exists a unique weak solution of problem~(\ref{P}) which is the 
limit of a sequence of bounded flux solutions $(u_n)_n$, i.e. for all $n\ge 1$, 
$\partial_x\phii(u_n)\in L^\infty(\Qq_i)$.

\begin{thm}[Existence and uniqueness of the SOLA]\label{existence_SOLA}
Let $u_0\in L^\infty(-1,1)$, $0\le u_0\le 1$, and let $(u_{0,n})_{n\ge1}$ be a 
sequence of bounded flux initial data, i.e. for all $n\ge 1$,
\begin{itemize}
\item $0\le u_{0,n}\le 1$,
\item $\phii(u_{0,n})\in W^{1,\infty}(\O_i)$,
\item $\t\pi_1(u_{0,n,1})\cap\t\pi_2(u_{0,n,2})\neq\emptyset,$
\end{itemize}
 such that $$\lim_{n\rightarrow+\infty} \|u_{0,n}-u_0\|_{L^1(\O)} =0. $$
Let $(u_n)_{n\ge1}$ be the sequence of the bounded flux  solutions to the problem~(\ref{P}) 
for $u_{0,n}$ as initial data. Then the  sequence $(u_n)_{n\ge1}$ converges toward 
$u$ in $C(]0,T[,L^p(-1,1))$, $1\le p<+\infty$, where $u$ is a 
solution to the problem~(\ref{P}), called Solution Obtained as  Limit of Approximation (SOLA). 
Furthermore, if $u,v$ are two SOLAs, 
for initial data $u_0,v_0$, one has the following $L^1$-contraction 
principle: $\forall t\in [0,T]$, 
\begin{equation}
\SiN\int_{\O_i} \phi_i (u(x,t)-v(x,t))^\pm dx \le \SiN  \int_{\O_i} \phi_i 
(u_{0}(x)-v_{0}(x))^\pm dx.
\end{equation}
This particularly leads to the uniqueness of the SOLA.
\end{thm}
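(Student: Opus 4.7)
The plan is to use the $L^1$-contraction principle of Theorem~\ref{prop_unicite} as the main tool to obtain strong convergence of $(u_n)$, then pass to the limit in the weak formulation of Definition~\ref{gws}. First, applying Theorem~\ref{prop_unicite} to the bounded flux solutions $u_n$ and $u_m$ (which exist and are unique by Corollary~\ref{cor_uni}) gives, for every $t\in[0,T]$,
$$\SiN\int_{\O_i}\phi_i|u_n(x,t)-u_m(x,t)|\,dx\le\SiN\int_{\O_i}\phi_i|u_{0,n}(x)-u_{0,m}(x)|\,dx.$$
Since $(u_{0,n})$ is Cauchy in $L^1(\O)$ by assumption, $(u_n)$ is Cauchy in $C([0,T],L^1(\O))$ and converges to some $u\in C([0,T],L^1(\O))$ with $0\le u\le 1$. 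Because $0\le u_n\le 1$, the elementary interpolation $|u_n-u|^p\le|u_n-u|$ extends the convergence to $C([0,T],L^p(\O))$ for every $1\le p<+\infty$.

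Next, I would show that $u$ is a weak solution in the sense of Definition~\ref{gws} by passing to the limit in~\eqref{gws_eq} written for $u_n$. The terms in $u_n\,\partial_t\psi$ and $u_{0,n}\,\psi(0)$ are handled directly by the $L^1$-strong convergence. For the diffusive term, the key ingredient is the uniform estimate $\|\phii(u_{i,n})\|_{L^2(0,T;H^1(\O_i))}\le C$, which is obtained exactly as in the proof of Theorem~\ref{thm_existence}: the derivation depends only on $\|u_{0,n}\|_\infty\le 1$ and on the $\pi_i$, not on the gradients $\partial_x\phii(u_{0,n})$. Combined with the strong $L^2$-convergence of $u_n$ and the continuity of $\phii$, a Minty trick identical to the one in Section~\ref{existence} identifies the weak $L^2(0,T;H^1(\O_i))$-limit of $\phii(u_{i,n})$ with $\phii(u_i)$, which suffices for the passage to the limit in the weak formulation.

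The most delicate step is to verify the graph transmission condition $\t\pi_1(u_1)\cap\t\pi_2(u_2)\neq\emptyset$ a.e.\ on $\G\times(0,T)$ for the limit $u$. I would use the same trace-compactness argument as in the end of the proof of Theorem~\ref{thm_existence}: combining the uniform $L^2(H^1)$ bound with a time-translate estimate of the type~\eqref{time_trans} yields, by interpolation, relative compactness of $(\phii(u_{i,n}))$ in $L^2(0,T;H^s(\O_i))$ for every $s\in]0,1[$, hence convergence of traces in $L^2(0,T;H^{s-1/2}(\G))$ and, up to a subsequence, pointwise a.e.\ on $\G\times(0,T)$. Since $\phii$ is strictly increasing on $]0,1[$, this in turn gives $u_{i,n}\to u_i$ a.e.\ on the interface. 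For a.e.\ $(x,t)$ one may then pick $p_n(x,t)\in\t\pi_1(u_{1,n}(x,t))\cap\t\pi_2(u_{2,n}(x,t))$, and the case analysis of Theorem~\ref{thm_existence} (distinguishing whether the limit value is in $]0,1[$, equals $0$ or equals $1$) shows that any accumulation point of $p_n$ belongs to $\t\pi_1(u_1(x,t))\cap\t\pi_2(u_2(x,t))$.

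Finally, the $L^1$-contraction principle for SOLAs follows by passing to the limit $n\to\infty$ in the contraction applied to the pair $(u_n,v_n)$, using the $C([0,T],L^1)$-convergences already established. Uniqueness of the SOLA is then immediate: if $u_0=v_0$, two admissible approximating sequences may even be chosen equal so that $u_n=v_n$ for all $n$ by Corollary~\ref{cor_uni}, and the contraction applied to two different choices of approximating sequences shows that the limit does not depend on the chosen sequence. The principal obstacle is the third step: the energy estimate on $\phii(u_{i,n})$ and the associated trace-compactness must be carried out with constants independent of $n$, which is subtle because the initial-data gradients $\partial_x\phii(u_{0,n})$ are allowed to blow up as $n\to\infty$; this is why it is essential that the $L^2(H^1)$-bound be recovered from the weak formulation itself rather than from the bounded-flux bound~\eqref{maxPn}.
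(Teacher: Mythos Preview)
Your proof is correct and follows essentially the same route as the paper: Cauchy sequence from the $L^1$-contraction, strong $C([0,T];L^p)$ convergence, uniform $L^2(0,T;H^1(\O_i))$ bound to pass to the limit in the weak formulation, trace convergence to recover the graph condition, and finally the contraction for SOLAs by passing to the limit.

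There is one place where you work harder than necessary. For the $L^2(0,T;H^s(\O_i))$ convergence of $\phii(u_{i,n})$ (and hence trace convergence), you invoke a time-translate estimate and Kolmogorov-type compactness as in Section~\ref{existence}. The paper takes a shorter path: since $u_n\to u$ strongly in $L^2(\Qq_i)$ and $\phii$ is continuous and bounded, $\phii(u_{i,n})\to\phii(u_i)$ strongly in $L^2(\Qq_i)$; combined with the uniform $L^2(0,T;H^1(\O_i))$ bound, interpolation gives strong convergence in $L^2(0,T;H^s(\O_i))$ for every $s\in]0,1[$ directly, with no need for time translates or for the Minty trick. Similarly, your final worry about constants independent of $n$ largely dissolves once you notice that strong convergence of $u_n$ is already in hand, so only the $L^2(H^1)$ bound is needed from the energy estimate. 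For the graph condition, the paper condenses your case analysis into the single observation that $\{(a,b)\in[0,1]^2:\t\pi_1(a)\cap\t\pi_2(b)\neq\emptyset\}$ is closed.
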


\begin{proof}
Let $(u_{0,n})$ be a regular sequence of initial data converging toward $u_0$ in 
$L^1(-1,1)$ - one take e.g. $u_{0,n}\in C^\infty_c (]-1,0[\cup]0,1[)$. 
Then $(u_{0,n})$ is a Cauchy sequence,
and thanks to~(\ref{contract}), for all $t\in[0,T]$, 
$$
 \SiN\int_{\O_i} \phi_i |u_n(x,t)-u_m(x,t)| dx \le \SiN  \int_{\O_i} \phi_i 
|u_{0,n}(x)-u_{0,m}(x)| dx.
$$
Thus $(u_n)_n$ is a Cauchy sequence in $C([0,T];L^1(\O))$  and  converges to a function $u$  
in $C([0,T];L^1(\O))$. Since $(u_n)_n$ is bounded in $L^\infty(\Qq)$, 
one has $u_n\!\!\rightarrow\! u$ in $C([0,T];L^p(-1,1))$.

We now have to check that $u$ is a  weak solution to 
the problem~(\ref{P}). It is easy to check, using to the 
$L^\infty$-bound of $u_n$,  
that $\phii(u_n)$ tends toward $\phii(u)$ in $L^p(\OiT)$, 
for all $p\in[1,+\infty[$.
Thanks to~(\ref{L2H1_estimate}), the sequence $(\phii(u_n))_n$ is bounded in 
$L^2(0,T;H^1(\O_i))$, and thus $\phii(u_n)\rightarrow\phii(u)$ 
weakly in $L^2(0,T;H^1(\O_i))$, and $\phii(u_n)$ converges  
in $L^2(0,T;H^s(\O_i))$, for all $s\in]0,1[$, still toward $\phii(u)$.
Particularly, $u_{n,i}(t)$ tends toward $u_i(t)$. Since the set 
$\{(a,b)\in [0,1]^2\ |$ $\t\pi_1(a)\cap\t\pi_2(b)\neq\emptyset \}$ is closed,  
we can claim that 
$$\t\pi_1(u_1(t))\cap\t\pi_2(u_2(t))\neq\emptyset\quad \text{ for a.e. }t\in[0,T].$$
We can also pass to the limit in the weak formulation in order to conclude 
that $u$ is a  weak solution to the problem~(\ref{P}), achieving this way the existence of a SOLA $u$.

Let now $v$ be another SOLA, obtained through  a sequence $(v_{0,n})_n$ of regular initial data 
converging toward $v_0$. Thanks to~(\ref{contract}), one has,
$$
\SiN\int_{\O_i} \phi_i |u_n(x,t)-v_n(x,t)| dx \le \SiN  \int_{\O_i} \phi_i 
|u_{0,n}(x)-v_{0,n}(x)| dx,
$$
whose limit as n tends toward $+\infty$ gives the attempted $L^1$-contraction 
principle:
$$
\SiN\int_{\O_i} \phi_i |u(x,t)-v(x,t)| dx \le \SiN  \int_{\O_i} \phi_i 
|u_{0}(x)-v_{0}(x)| dx,
$$
and so the uniqueness of the SOLA, completing 
the proof of theorem~\ref{existence_SOLA}.
\end{proof}

%=========================BIBLIOGRAPHIE====================================
%

%
%\bibliographystyle{alpha}
%\bibliography{../../../ccances}

\end{document}